 	\definecolor{darkred}{rgb}{0.5,0,0}
 	\definecolor{darkgreen}{rgb}{0,0.5,0}
                \definecolor{darkblue}{rgb}{0,0,0.5} 	\hypersetup{colorlinks,linkcolor=darkblue,filecolor=darkgreen,urlcolor=darkred,citecolor=darkblue}
    \renewenvironment{description}[1][0pt]
    {\list{}{\labelwidth=0pt \leftmargin=#1
    }}
    {\endlist}
    \numberwithin{equation}{section}
\theoremstyle{plain}
\newtheorem{theorem}{Theorem}[section]
\newtheorem{proposition}[theorem]{Proposition}
\newtheorem{lemma}[theorem]{Lemma}
\theoremstyle{remark}
\newtheorem{remark}[theorem]{Remark}
\newcommand{\R}{\mathbb{R}}
\newcommand{\N}{\mathbb{N}}
\newcommand{\C}{\mathbb{C}}
\newcommand{\E}{\mathbb{E}}
\newcommand{\PP}{\mathbb{P}}
\newcommand{\Eins}{\mathbf{1}}
\newcommand{\bS}{\mathbb{S}}
\newcommand{\cE}{\mathcal{E}}
\newcommand{\ball}[2]{B_{#1} (#2)}
\newcommand{\drm}{\mathrm{d}}
\newcommand{\euler}{\mathrm{e}}
\newcommand{\cD}{\mathcal{D}}
\newcommand{\diam}{\mathrm{diam}}
\newcommand{\sn}{\mathrm{sn}}
\newcommand{\dist}{\mathrm{d}}
\newcommand{\cF}{\mathcal{F}}
\newcommand{\alphamax}{\alpha_{0}}
\DeclareMathOperator{\dvol}{\mathrm{dvol}}
\DeclareMathOperator{\Ric}{Ric}
\DeclareMathOperator{\Vol}{Vol}
\newcommand{\dom}{D}
\DeclareMathOperator*{\esssup}{ess\,sup}
\begin{document}
\title{Unique continuation estimates on manifolds with Ricci curvature bounded below}
\author{Christian Rose}
\affil{Universit\"at Potsdam, Institut f\"ur Mathematik, 14476 Potsdam, Germany
}
\author{Martin Tautenhahn}
\affil{Universit\"at Leipzig, Mathematisches Institut, 04109 Leipzig, Germany}
\date{\vspace{-4ex}}
\maketitle
\begin{abstract}
We prove quantitative unique continuation estimates for relatively dense sets and  spectral subspaces associated to small energies of Schr\"odinger operators on Riemannian manifolds with Ricci curvature bounded below. The upper bound for the energy range and the constant appearing in the estimate are given in terms of the lower bound of the Ricci curvature and the parameters of the relatively dense set. 
\end{abstract}
\section{Introduction and main result}
Let $M$ be a complete Riemannian manifold, $S\subset M$, and $H$ a lower semi-bounded self-adjoint Schr\"odinger operator acting in $L^2 (M)$. Quantitative unique continuation estimates on $M$ refer to inequalities of the form 
\begin{equation}\label{UCP}
\Vert \varphi\Vert_{L^2(M)}\leq C \Vert\varphi\Vert_{L^2(S)}
\end{equation}
for functions $\varphi$ in the spectral subspaces of $H$ up to energy $E$. The constant $C$ depends on the geometric properties of the manifold $M$, the set $S$, the energy $E$, and the operator $H$, but should be independent of the function $\varphi$. There exist numerous notions of inequalities of the type \eqref{UCP} in the mathematical literature, e.g., uncertainty relation, quantitative unique continuation, energy estimate, spectral inequalities, or spectral estimates. The various applications of such a unique continuation estimate also benefit from an explicit (and optimal) dependence of $C$ on these model parameters.
Of particular interest is the study of conditions on $M$, $S$, and $H$ such that \eqref{UCP} is true for all  $I \subset (-\infty , E_0]$ with some $E_0 > \min \sigma (H)$.
\par
Classically, problem \eqref{UCP} is investigated in the case $M = \R^n$. If $H=\Delta\geq 0$, and $S$ is a so-called thick set, then \eqref{UCP} follows from the Logvinenko-Sereda theorem \cite{LogvinenkoS-74}, see also  \cite{Panejah-61,Kacnelson-73,Kovrijkine-00,Kovrijkine-01}.
Up to now, there is a huge amount of literature on quantitative unique continuation estimates on classes of subsets of $\R^n$ under various assumptions on $H$, the geometric properties of $S$, and the length of the energy interval $I$, see, e.g., \cite{LebeauR-95, LebeauZ-98, JerisonL-99, RojasMV-13, Klein-13, KleinT-16, NTTV-18, NTTV-20b,DickeRST-23, Egidi-21,EgidiS-21} and the references therein. Recently, \eqref{UCP} was proven for uniform elliptic operators without any regularity conditions on the symbol in convex subsets of $\R^n$ and relatively dense control sets for subspaces related to small energies in \cite{StollmannS-21}.
The interest in quantitative unique continuation estimates stems particularly from the diverse applications in different fields of mathematics. It proved to be a powerful tool, e.g., in control theory of the heat equation and the theory of random Schr\"odinger operators, see \cite{BourgainK-05,CombesHK-07,GerminetK-01, GerminetK-13,BourgainK-13,MuellerRM-22, SeelmannT-20}. 
\par
Our focus is on unique continuation estimates on Riemannian manifolds rather than subsets of $\R^n$, which is motivated by their strong geometric implications.
Prominent examples of utilizing unique continuation estimates in Riemannian geometry are \cite{DonnellyF-88,DonnellyF-90,DonnellyF-90a}, where the authors employed variants of \eqref{UCP} to derive vanishing order estimates for eigenfunctions of the Laplace--Beltrami operator for a given compact manifold $M$. 
Since then, unique continuation properties have been investigated for various manifolds $M$ and $S\subset M$, see, e.g., \cite{EgidiV-20, LebeauM-19, BurqM-21,BurqM-23, Miller-05, DickeV-22}. However, all known results treating \eqref{UCP} on manifolds depend explicitly on the symbol of the Laplace--Beltrami operator. More precisely, they can only be formulated in terms of ellipticity and Lipschitz constants of the metric tensor $g$ or the defect with respect to a given metric. While being quantitative, the drawback of those results is that $g$ must be known a priori in a certain way, i.e., the full description of $M$ is needed to obtain quantitative unique continuation estimates.  
It is well-known that there is no possibility in general of recovering the metric tensor completely from local intrinsic geometric information. Rather than prescribing a metric tensor, it is natural to impose local geometric restrictions to conclude global geometric and analytic properties of manifolds such as curvature bounds. We aim at deriving \eqref{UCP} with explicit $E$ and $C$ for manifolds satisfying curvature restrictions in order to treat \emph{classes} of Riemannian manifolds satisfying the same curvature conditions. This allows in particular to consider perturbations of metrics and derive uniform results as well apart from perturbations of Euclidean space. 
\par
Our contribution is a quantitative unique continuation estimate for Schr\"odinger operators on Riemannian manifolds with Ricci curvature bounded below at energies close to the bottom of the spectrum. Moreover, we consider operators of the form $H = \Delta + V$ assuming only minimal assumptions on the positive part of $V$, and some form boundedness condition on the negative part of $V$.
\par
Let $M = (M^n , g)$ be a complete Riemannian manifold without boundary of dimension $n \in \N$ and Ricci tensor $\Ric$. For a comprehensive introduction to Riemannian geometry, see, e.g., the excellent \cite{GallotHL-87}. We denote the distance function by $\dist : M\times M \to [0,\infty)$, and the volume element by $\dvol$. For $x \in M$ and $r > 0$ we denote by $\ball{x}{r} = \{y \in M \colon \dist (y,x) < r\}$ the open ball with radius $r$ and center $x$. Given $0<\rho<R$, a set $S\subset M$ is called \emph{$(R,\rho)$-relatively dense} provided
\[
\forall x\in M \ \exists y\in S\colon \ball{x}{R} \cap S\supset \ball{y}{\rho}.
\]
Given $x\in M$ and $R > 0$, we call $R$ a \emph{proper radius for $x$}, if $\diam(M)>R$. Moreover, we call a $(R,\rho)$-relatively dense set $S\subset M$ \emph{proper}, if $3R$ is a proper radius for all $x\in M$. 
Let us now turn to the operator theoretic definitions. We define the quadratic 
form $\cE \colon \dom(\cE) \to \R$ by
\[
 \dom(\cE)= W^{1,2}(M) ,\quad \cE(f)= \int_M \vert \nabla f\vert^2\dvol,
\]
and denote by $\Delta$ the Laplace-Beltrami operator of $M$, that is, the unique self-adjoint operator in $L^2 (M)$ associated to the form $\cE$. Note that $\Delta \geq 0$ by construction. 
Now we define successively two form perturbations of $\mathcal{E}$.
First, let $\mathcal{V}_+\colon \dom (\mathcal{V}_+)  
\to \R$ be a densely defined non-negative quadratic form. We assume that the form sum $\cE + \mathcal{V_+}$ with domain $\dom(\cE) \cap \dom (\mathcal{V}_+)$ is densely defined and closed, and thus defines a unique non-negative self-adjoint operator in $L^2 (M)$, see \cite[Theorem~VI.2.6]{Kato}. This is for example the case if $\mathcal{V_+}$ is relatively bounded with respect to $\mathcal{E}$ with $\cE$-bound smaller than one, see \cite[VI.1.33]{Kato}.
Second, let $\mathcal{V}_-\colon \dom (\mathcal{V}_-) 
 \to \R$ be a non-negative quadratic form relatively bounded with respect to $\mathcal{E} + \mathcal{V}_+$, i.e.\ $\dom (\mathcal{V}_-) \supset \dom(\cE) \cap \dom (\mathcal{V}_+)$ and there are constants $a \in (0,1)$ and $b \geq 0$ such that
\begin{equation}\label{eq:relative}
\mathcal{V}_- (f)  
\leq 
a (\mathcal{E} + \mathcal{V}_+)(f) + b \lVert f \rVert^2 ,
\quad
f \in \dom (\mathcal{E}) \cap \dom (\mathcal{V}_+) .
\end{equation}
As a consequence, the form $$\mathcal{H} \colon \dom (\mathcal{E}) \cap \dom (\mathcal{V}_+) \to \R, \quad f\mapsto \mathcal{E} (f) + \mathcal{V}_+ (f) - \mathcal{V}_- (f),$$ is a densely defined, closed form bounded
from below \cite[Theorem~VI.1.33]{Kato}. We denote the unique lower semi-bounded self-adjoint operator by $H$ and its domain by $\dom (H)$, cf.\ \cite[Theorem~VI.2.6]{Kato}. Let us stress that our operator theoretic setting includes measure perturbations, e.g.~as in \cite{StollmannV-96}), or singular potentials.
\par
Our main theorem reads as follows.
\begin{theorem}\label{thm:intro}
Let $K\in\R$, $n\in\N$, $n\geq 3$, and $0<\rho<R$. There are $\kappa=\kappa(K,R,\rho,n)>0$ and $E_0=E_0(K,R,\rho,n,a,b) \in \R$ such that for any complete Riemannian manifold $M$ of dimension $n$ with $\Ric\geq K$, any proper $(R,\rho)$-relatively dense $S\subset M$, and any $I\subset(-\infty,E_0]$, we have 
\[
\chi_I(H)\Eins_S \chi_I(H)\geq \kappa\ \chi_I(H),
\]
where the inequality holds in quadratic form sense. 
\end{theorem}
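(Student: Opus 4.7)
I first translate the operator inequality into its equivalent functional form: it suffices to exhibit constants $\kappa = \kappa(K,R,\rho,n) > 0$ and $E_0 = E_0(K,R,\rho,n,a,b)$ such that
\[
\|\varphi\|_{L^2(S)}^{2} \geq \kappa\, \|\varphi\|_{L^2(M)}^{2}
\quad \text{for every } \varphi \in \Ran\chi_{I}(H),\ I \subset (-\infty, E_0].
\]
This pointwise $L^{2}$ bound on spectral-subspace vectors is a global propagation statement, which I would prove by chaining \emph{local} quantitative unique continuation estimates along a covering of $M$ by $R$-balls supplied by the relatively dense set $S$.

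The core technical step is a local three-ball inequality. On each ball $\ball{x}{3R} \subset M$, the plan is to establish that for any $\varphi \in \Ran\chi_{I}(H)$ and any $y$ with $\ball{y}{\rho} \subset \ball{x}{R}$,
\[
\|\varphi\|_{L^2(\ball{x}{R})}
\leq C\, \|\varphi\|_{L^2(\ball{y}{\rho})}^{\alpha}\, \|\varphi\|_{L^2(\ball{x}{3R})}^{1-\alpha},
\]
with $C>0$ and $\alpha \in (0,1)$ depending only on $K,R,\rho,n,a,b$. Two natural routes are available. The first is a direct Carleman inequality for $H-E_{0}$ on geodesic balls, with weights chosen so that the boundary and lower-order terms are controlled via Laplacian comparison and Bishop--Gromov volume doubling under $\Ric \geq K$. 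The second is a \emph{ghost-dimension} construction: define $F(x,t)$ on $M \times \R$ through the spectral calculus of $H$ so that $F$ satisfies an elliptic equation on the cylinder with $F|_{t=0}=0$ and $\partial_{t}F|_{t=0}=\varphi$, and apply a (more standard) elliptic three-ball estimate to $F$ on $\ball{x}{3R} \times (-3T,3T)$. In either route, the relative form bound \eqref{eq:relative} enters through the potential term and dictates the permissible size of $E_0$ as a function of $a,b$.

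A global covering argument then closes the proof. Using that $S$ is $(R,\rho)$-relatively dense and $3R$ is a proper radius for every $x$, I would pick for each $x\in M$ a point $y(x) \in S$ with $\ball{y(x)}{\rho} \subset \ball{x}{R} \cap S$, choose a maximal $R$-packing $\{x_i\} \subset M$ whose $3R$-balls cover $M$ with multiplicity bounded by $N=N(K,R,n)$ via Bishop--Gromov, and sum the local three-ball inequalities across this cover. H\"older's inequality on the packing sum, followed by algebraic rearrangement to absorb the factor $\|\varphi\|_{L^2(M)}^{2(1-\alpha)}$, then yields
\[
\|\varphi\|_{L^{2}(M)}^{2} \leq \kappa^{-1}\, \|\varphi\|_{L^{2}(S)}^{2},
\]
the desired bound. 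The main obstacle is the local Carleman/three-ball step: classical Euclidean weight constructions must be adapted to geodesic normal coordinates on $M$, and the error terms produced by the curvature of the metric have to be absorbed into constants depending only on $K$, $n$, and the radii $R,\rho$, rather than on the full (unknown) metric tensor. The restriction $I \subset (-\infty, E_{0}]$ with $E_0$ small is precisely what makes the spectral problem amenable to these Carleman-type methods, and the explicit dependence on $a,b$ reflects the form-perturbation input from \eqref{eq:relative}.
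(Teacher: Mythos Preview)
Your proposal follows the Carleman/three-ball route, which is \emph{not} the paper's approach and, more importantly, contains a genuine gap at exactly the step you flag as ``the main obstacle.'' Carleman inequalities on a Riemannian manifold---whether done directly or through a ghost-dimension lift---require quantitative control of the metric in local coordinates: ellipticity bounds and at least Lipschitz regularity of $g$, with constants entering the Carleman weight construction and the absorption of commutator terms. A lower Ricci bound $\Ric\ge K$ does \emph{not} provide such uniform coefficient control; it gives volume comparison and Laplacian comparison, but no uniform bound on $\lvert\nabla g\rvert$ in normal coordinates across the class of manifolds under consideration. The paper states this explicitly in the introduction: all prior manifold results of this type ``can only be formulated in terms of ellipticity and Lipschitz constants of the metric tensor $g$,'' and avoiding this dependence is the entire point. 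Your sentence ``the error terms \dots\ have to be absorbed into constants depending only on $K$, $n$, and the radii'' is precisely the step that is not known how to do, and there is no indication in your sketch of a mechanism for it.

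The paper's actual proof is completely different and bypasses Carleman estimates. It follows the spectral/semigroup strategy of Stollmann--Stolz: one shows (Lemma~\ref{lemma:abstractlifting}, Theorem~\ref{thm:UCP-abstrakt}) that the desired inequality reduces to a lower bound on $\inf\sigma(\Delta+\beta\Eins_{S_\rho})$ for large coupling $\beta$, which in turn is compared to $\inf\sigma(\Delta^{M,S})$ via a Feynman--Kac computation and a Brownian-motion hit-and-run estimate (Lemma~\ref{lemma:hitrun}, Proposition~\ref{prop:norm-difference}). The quantities that then need geometric control are the first exit time $\PP_z(\tau_{\rho/2}\le\alpha)$ and the Dirichlet eigenvalue $\lambda^{M,S}$, and both are estimated using only Bishop--Gromov volume comparison and heat-kernel upper bounds valid under $\Ric\ge K$ (Sections~\ref{section:spectral}--\ref{section:ricci}). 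This is why the result is restricted to low energies $E_0$: it is a variational/large-coupling argument, not a propagation-of-smallness argument, and the energy threshold is intrinsic to the method rather than an artifact of making Carleman work. Your remark that the smallness of $E_0$ ``is precisely what makes the spectral problem amenable to these Carleman-type methods'' has it backwards---Carleman methods, when available, typically give all energies with energy-dependent constants; the low-energy restriction here signals that a different mechanism is at work.
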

Note that the statement of the theorem is void if $E_0 < \min \sigma (H)$. We will restate Theorem~\ref{thm:intro} with explicit constants as Theorem~\ref{thm:ricci} in Section~\ref{section:qupricci}. In particular, we observe that $E_0 > 0$ if the negative part $\mathcal{V}_-$ is small, i.e.\ if $a$ and $b$ are small.
Moreover, the constants naturally depend on quantities of comparison spaces $M_K=M_K^n$ of constant sectional curvature $K$. More precisely, denoting by $\sn_K$ resp.~$\Vol_K$ the density and volume of $M_K$, we have 
\[
\kappa\sim \frac{\rho^2\sn_K(\rho)^{n-2}}{\Vol_K(R)}\left[\euler^{\sqrt KR}+\rho^2K+\left|\ln\left(\frac{\Vol_K(R)}{\rho^2\sn_K(\rho)^{n-2}}\right) \right|\right]^{-2}
\]
and 
\[E_0\sim (1-a) \frac{\sn_K(\rho)^{n-2}}{\Vol_K(R)}-b.
\]
To the best of our knowledge, Theorem~\ref{thm:intro} is the first quantitative unique continuation estimate for manifolds where the constants $\kappa$ and $E_0$ only depend on the lower bound of the Ricci curvature instead of assumptions on a particular given metric. In particular, we obtain uniform bounds for whole classes of manifolds. By applying the technique in \cite{StollmannS-21}, it is feasible to apply our result to uniformly elliptic divergence type operators with only measurable and bounded coefficients on manifolds, without any regularity assumption on the symbol.
\begin{remark}
\begin{enumerate}[(i)]
\item If $K=0$, we recover the results from \cite{StollmannS-21} in the case $M=\R^n$, that is, quantitative unique continuation estimates at low energies for Schr\"odinger operators defined in quadratic form sense. However, even in the case $M = \R^n$ our result applies to a more general class of potentials.
\item In \cite{DickeRST-23}, the authors obtained \eqref{UCP} for all energies for Schr\"odinger operators with a certain class of singular potentials. In contrast to the last mentioned paper, our operators are defined via quadratic forms, and thus applies even for certain measure perturbations.
\item
If a manifold $M$ satisfies $\Ric\geq -K$, $K\geq 0$, then
\[
0\leq \inf \sigma(\Delta^M)\leq \lambda^{K,n},
\]
where $\lambda^{K,n}$ denotes the bottom of the spectrum of the Laplace--Beltrami operator in the model space $M_K^n$.
This implies that the bottom of the spectrum is zero if $\Ric\geq 0$. Moreover, if a non-compact manifold satisfies $\Ric\geq -K$, $K\geq 0$, then there is some essential spectrum in the interval $[0,K^2]$, \cite{Donnelly-81}. If $\Ric\geq K>0$ the manifold is compact by the classical Bonnet-Myers theorem such that the spectrum is purely discrete with smallest eigenvalue zero. The classical Lichnerowicz estimate yields the lower bound $n K / (n-1)$ for the first positive eigenvalue. In contrast, by the work of Zhong-Yang, the latter is bounded below by  $\pi^2 / D^2$ if $\diam(M)\leq D$ and $\Ric\geq 0$.
\end{enumerate}
\end{remark}
In order to prove Theorem~\ref{thm:ricci} we adapt the strategy developed in \cite{LenzSS-20,StollmannS-21} to complete Riemannian manifolds with a lower Ricci curvature bound. While general functional analytic principles used in these articles carry over to our setting, our geometric assumptions reveal some new abstract insights and technical difficulties.
The main observation used in the articles \cite{LenzSS-20,StollmannS-21} goes back to \cite{BoutetdeMonvelLS-11}. More precisely, the latter article shows that a quantitative unique continuation estimate can be drawn from the fact that the bottom of the spectrum of $\Delta+\beta\Eins_S$ can be raised above a given energy interval $I$ in the large coupling limit $\beta\to\infty$. 
\par
If $B\supset S$ is slightly larger than $S$ and $\Delta^{M,S}$ denotes the Dirichlet-Laplacian on $L^2(M\setminus S)$,  the spectral bottoms of
the large coupling limits of $\Delta+\beta\Eins_B$ and $\Delta^{M,S}+\beta\Eins_{B\setminus S}$ can be expected to be comparable for large $\beta$.
This comparability will follow from a norm bound on the associated heat semigroups via a generalization of the hit-and-run lemma from \cite{McGillivraySS-95,StollmannS-21} to arbitrary Riemannian manifolds: the probability that a particle starting in $M$ which hits $S$ does not stay too long in $B$. In contrast to the Euclidean case, we cannot just use the reflection principle as this is not available in general. Instead, we provide a general bound on the semigroup differences above in terms of the first exit time of the diffusion on $M$ and the coupling constant $\beta$. 
\par
By combining the above steps, we derive an abstract quantitative unique continuation estimate depending on the first exit time of the Brownian motion, the large coupling constant, and the infimum of the spectrum of $\Delta^{M,S}$. 
It remains to estimate the spectral bottom of $\Delta^{M,S}$ and the first exit time depending on the geometry of $S$, $B$, and $M$. Upper and  lower bounds on the bottom of the spectrum are reduced to bound the bottom of the Laplacian in balls inside the complement of $S$ respectively star-shaped domains in terms of the lower Ricci curvature bound. 
In order to bound the first exit time we follow the strategy of \cite{HebischSaloffCoste-01}.  Keys in these proofs are the well-established Bishop-Gromov volume comparison estimate and appropriate heat kernel upper bounds depending on the lower Ricci curvature bound.
\par
The structure of this paper is as follows.
Section~\ref{section:abstract} is devoted to the proof of the abstract quantitative unique continuation estimate in terms of the first exit time. In Section~\ref{section:spectral} we obtain a lower bound of the Dirichlet-Laplacian of relatively dense complements in terms of the volume density. Section~\ref{section:ricci} is reserved to obtain all the quantitative bounds in terms of the geometric assumptions. The main theorem is then discussed and proven in Section~\ref{section:qupricci}.
\section{Norm estimates for the semigroup at large coupling and an abstract quantitative unique continuation estimate}\label{section:abstract}
Let $S\subset M$ be open, $\rho\geq0$, and $S_\rho = \{y \in M \colon \dist (y , S) < \rho\}$ the open tubular neighbourhood of $S$ with radius $\rho$. Furthermore, let $(\Omega, \mathcal{A}, (\PP_x)_{x\in M},\allowbreak (X_t)_{t\geq 0}, (\cF_t)_{t\geq 0})$ be the Brownian motion associated to $\Delta$. For $\alphamax>0$ we denote the occupation time in $S_\rho$ in the interval $[0,\alphamax]$ by $T^\tau_{S,\rho}: \Omega \to [0,\alphamax]$, that is,
\[
T^{\alphamax}_{S,\rho}(\omega) = \int_0^{\alphamax} \Eins_{S_\rho} \circ X_s(\omega)\drm s.
\]
Moreover, we denote the first hitting time of $S_\rho$ by $\sigma_{S,\rho} : \Omega \to [0,\infty)$, as well as the first exit time of the ball with radius $r>0$ by $\tau_{r} : \Omega \to [0,\infty)$, that is,
\[
 \sigma_{S,\rho} (\omega) = \inf\{ s\geq 0 \colon X_s (\omega) \in S_\rho \}, \quad
 \tau_{r} {(\omega)} = \inf \{s \geq 0 \colon X_s {(\omega)} \not \in \ball{r}{X_0(\omega)} \} .
\]
If $\rho = 0$ we set $\sigma_S = \sigma_{S,0}$. 
The following lemma is a variant of a result obtained for the Brownian motion in $\R^n$ in \cite{McGillivraySS-95}. 
It has been recaptured in \cite{StollmannS-21}. 
Since we do not impose any geometric assumption on $M$ in this section, our upper bound is given in terms of the first exit time. If the Ricci curvature is bounded below, then we obtain an explicit upper bound, cf.~Section~\ref{section:ricci}.
\begin{lemma} \label{lemma:hitrun} For all $S \subset M$ open, $\alphamax,\rho  >0$, $\alpha \in (0,\alphamax)$, and $x \in M$ we have
\[
\PP_x(\sigma_{S} \leq \alphamax , T_{S,\rho}^{\alphamax}\leq \alpha)\leq \sup_{z \in M} \PP_z (\tau_{\rho / 2} \leq \alpha).
\]
\end{lemma}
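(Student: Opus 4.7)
The plan is to combine a simple geometric inclusion with the strong Markov property of Brownian motion at the first hitting time $\sigma_S$. The key geometric observation is that on $\{\sigma_S \leq \alphamax\}$, the right-continuity of the Brownian paths together with the openness of $S$ forces $X_{\sigma_S}\in\overline S$, so that $\dist(X_{\sigma_S},S)=0$ and consequently
\[
\ball{\rho/2}{X_{\sigma_S}} \subset S_\rho.
\]
The half-radius is exactly what makes this inclusion free.

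The core step is then to establish, up to a $\PP_x$-null set, the pathwise containment
\[
\{\sigma_S\leq\alphamax,\ T_{S,\rho}^{\alphamax}\leq\alpha\}\;\subset\;\{\sigma_S\leq\alphamax,\ \tau_{\rho/2}\circ\theta_{\sigma_S}\leq\alpha\},
\]
where $\theta_t$ denotes the canonical time-shift on path space and $\tau_{\rho/2}\circ\theta_{\sigma_S}$ is the first-exit time of the shifted trajectory from the fixed ball $\ball{\rho/2}{X_{\sigma_S}}$. Indeed, if $\tau_{\rho/2}\circ\theta_{\sigma_S}>\alpha$, then for every $s\in[\sigma_S,\sigma_S+\alpha]$ the path satisfies $X_s\in\ball{\rho/2}{X_{\sigma_S}}\subset S_\rho$, so the occupation time of $S_\rho$ over $[0,\alphamax]$ is at least $\min(\alpha,\alphamax-\sigma_S)$. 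Combined with the hypothesis $T_{S,\rho}^{\alphamax}\leq\alpha$, this produces an outright contradiction in the bulk regime $\sigma_S\leq\alphamax-\alpha$, and pins down a measure-zero equality $T_{S,\rho}^{\alphamax}=\alpha$ in the residual boundary layer $\sigma_S\in(\alphamax-\alpha,\alphamax]$, to be disposed of by a standard path-continuity argument.

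Granted the inclusion, the strong Markov property at $\sigma_S$ finishes the proof: conditioning on $\mathcal{F}_{\sigma_S}$ and using $X_{\sigma_S}$ as the new starting point of a fresh Brownian motion gives
\[
\PP_x\bigl(\sigma_S\leq\alphamax,\ \tau_{\rho/2}\circ\theta_{\sigma_S}\leq\alpha\bigr) = \E_x\bigl[\Eins_{\{\sigma_S\leq\alphamax\}}\,\PP_{X_{\sigma_S}}(\tau_{\rho/2}\leq\alpha)\bigr] \leq \sup_{z\in M}\PP_z(\tau_{\rho/2}\leq\alpha),
\]
where the inner conditional probability is bounded uniformly by its supremum over starting points and $\PP_x(\sigma_S\leq\alphamax)\leq 1$. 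The main technical subtlety I anticipate is the clean handling of the boundary-layer regime $\sigma_S\in(\alphamax-\alpha,\alphamax]$, where the naive lower bound $\alphamax-\sigma_S$ on the occupation time degrades and one must fall back on a null-set argument; beyond that, the proof is a routine strong-Markov reduction powered by the geometric inclusion.
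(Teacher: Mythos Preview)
Your approach has a genuine gap in the boundary layer $\sigma_S\in(\alphamax-\alpha,\alphamax]$, and it cannot be repaired by a null-set argument. On that layer, assuming $\tau_{\rho/2}\circ\theta_{\sigma_S}>\alpha$ yields only $T_{S,\rho}^{\alphamax}\geq \alphamax-\sigma_S$, which is \emph{strictly less} than $\alpha$; this is perfectly compatible with the hypothesis $T_{S,\rho}^{\alphamax}\leq\alpha$, and no equality $T_{S,\rho}^{\alphamax}=\alpha$ is forced. The event
\[
\bigl\{\sigma_S\in(\alphamax-\alpha,\alphamax],\ T_{S,\rho}^{\alphamax}\leq\alpha,\ \tau_{\rho/2}\circ\theta_{\sigma_S}>\alpha\bigr\}
\]
has positive probability in general: think of a path that spends almost no time in $S_\rho$ before reaching $S$ just before time $\alphamax$, and then sits calmly inside $B_{\rho/2}(X_{\sigma_S})$ for the next $\alpha$ units of time. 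Your claimed pathwise inclusion therefore fails, not on a null set but on an event of positive measure.

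The paper's proof eliminates this problem by applying the strong Markov property at the \emph{earlier} stopping time $\sigma_{S,\rho/2}$ (truncated to be a.s.\ finite) rather than at $\sigma_S$. The gain is twofold: one still has $B_{\rho/2}(X_{\sigma_{S,\rho/2}})\subset S_\rho$, but in addition, when $X_0\notin S_{\rho/2}$, the point $X_{\sigma_{S,\rho/2}}$ lies at distance $\geq\rho/2$ from $S$, so $B_{\rho/2}(X_{\sigma_{S,\rho/2}})\cap S=\emptyset$. Consequently, if the path stays in this ball for time $\alpha$ it has not yet reached $S$, i.e.\ $\sigma_S>\sigma_{S,\rho/2}+\alpha$. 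Combined with $\sigma_S\leq\alphamax$ this forces $\sigma_{S,\rho/2}+\alpha<\alphamax$, so the full interval $[\sigma_{S,\rho/2},\sigma_{S,\rho/2}+\alpha]$ lies inside $[0,\alphamax]$ and contributes at least $\alpha$ to $T_{S,\rho}^{\alphamax}$; there is no boundary layer left over. The choice of $\sigma_{S,\rho/2}$ instead of $\sigma_S$ is the missing idea; once you make it, the strong-Markov reduction you sketch goes through.
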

\begin{proof}[Proof of Lemma~\ref{lemma:hitrun}]
Fix $\alphamax$, $\rho$ and $\alpha$ as in the assumptions of the theorem. Let $m> \alphamax$ be fixed and set $\sigma_{S,\rho/2,m} = \min \{\sigma_{S,\rho/2} , m\}$. Note that the stopping time $\sigma_{S,\rho / 2,m}$ is $\PP_x$-almost surely finite for any $x \in M$. This will be used when applying the strong Markov property below.
Assume that $\omega \in \Omega$ is such that the corresponding sample path is continuous and
\begin{equation} \label{eq:contra0}
 \forall s \in [0,\alpha] \colon \quad
 \dist (X_{\sigma_{S , \rho / 2 , m} (\omega)} (\omega) , X_{\sigma_{S , \rho / 2 , m} (\omega) + s} (\omega)) < \rho / 2 . 
\end{equation}
Then we have
\begin{equation} \label{eq:contra}
 \omega \not \in \{\sigma_{S} \leq \alphamax , T^{\alphamax}_{S,\rho} \leq \alpha \} .
\end{equation}
Indeed, if $\sigma_{S} (\omega) > \alphamax$ we conclude \eqref{eq:contra}. 
If $\sigma_{S} (\omega) \leq \alphamax$ and $X_0 (\omega) \in S_{\rho / 2}$, we have $\sigma_{S , \rho / 2 , m} (\omega) = \sigma_{S , \rho / 2} (\omega) = 0$, and by assumption \eqref{eq:contra0} we find $X_s(\omega) \in \ball{\rho / 2}{X_{0} (\omega)} \subset S_\rho$ for all $s \in [0 , \alpha]$. Since $\alpha <\alphamax$ we find $T^{\alphamax}_{S,\rho} (\omega) > \alpha$ which implies \eqref{eq:contra}.
 Assume now $\sigma_{S} (\omega) \leq \alphamax$ and $X_0 (\omega) \not \in S_{\rho / 2}$. Since $m > \alphamax$ and $\sigma_{S , \rho / 2} (\omega) < \sigma_S (\omega)$ we have $\sigma_{S,\rho / 2,m}(\omega) = \sigma_{S,\rho / 2} (\omega)$. Thus, assumption \eqref{eq:contra0} implies $X_t(\omega) \in \ball{\rho / 2}{X_{\sigma_{S ,\rho / 2 }(\omega)}   (\omega)} \allowbreak \subset S_\rho$ for all $t \in [\sigma_{S , \rho / 2} (\omega) , \sigma_{S , \rho / 2} (\omega) + \alpha]$.
 Since $\ball{\rho / 2}{X_{\sigma_{S ,\rho / 2} (\omega)} (\omega)}  \cap S = \emptyset$, this implies $\sigma_{S,\rho / 2} (\omega) + \alpha < \sigma_{S} (\omega) \leq \alphamax$. Thus we have $T^{\alphamax}_{S,\rho} > \alpha$ which implies \eqref{eq:contra}.
\par
Fix now $x \in M$.
Since the sample paths are almost surely continuous, the contraposition of the implication \eqref{eq:contra0} $\Rightarrow$ \eqref{eq:contra} implies
\begin{equation} \label{eq:HleqE}
\PP_x (\{ \sigma_{S} \leq \alphamax, T^{\alphamax}_{S,\rho}\leq \alpha\}) \leq \PP_x (E) , 
\end{equation}
where
\[
 E = \{\omega \in \Omega \colon  d (X_{\sigma_{S , \rho / 2 , m} (\omega)} (\omega) , X_{\sigma_{S , \rho / 2 , m} (\omega) + s} (\omega))\geq \rho / 2 \ \text{for some} \ s \in [0, \alpha] \}  .
\]
By \eqref{eq:HleqE} and the definition of the conditional expectation we have
\begin{equation} \label{eq:conditional}
 \PP_x (\sigma_{S} \leq \alphamax , T^{\alphamax}_{S,\rho}\leq \alpha)
 \leq \E_x (\Eins_E)
 = \E_x 
 ( \E_x (\Eins_E \mid \mathcal{F}_{\sigma_{S , \rho / 2 , m}}) ) ,
\end{equation}
where $\mathcal{F}_{\sigma_{S , \rho / 2,m}}$ denotes the $\sigma$-algebra of $\sigma_{S , \rho / 2 , m}$-past, that is,
\[
 \mathcal{F}_{\sigma_{S , \rho / 2 , m}}
 =
 \{ A \in \mathcal{A} \colon A \cap \{\sigma_{S , \rho / 2 , m} \leq t\} \in \mathcal{F}_t \ \text{for any} \ t \geq 0 \} .
\]
Let $E_0 = \{\omega \in \Omega \colon  d (X_{0} (\omega) , X_{s} (\omega) )\geq \rho / 2 \ \text{for some} \ s \in [0,\alpha] \}$. Then, by \eqref{eq:conditional}, the fact that the stopping time $\sigma_{S , \rho / 2 , m}$ is $\PP_x$-almost surely finite, and the strong Markov property, we find
\begin{align*}
 \PP_x (\sigma_{S} \leq \alphamax, T^{\alphamax}_{S,\rho}\leq \alpha) 
 &\leq 
 \E_x 
 ( \E_x (\Eins_E \mid \mathcal{F}_{\sigma_{S , \rho / 2,m}}) )
 \\ &=
 \E_x \bigl(\E_{X_{\sigma_{S , \rho / 2 , m}}} (\Eins_{E_0} ) \bigr)
 \leq
 \sup_{z \in M} \PP_z (\tau_{\rho / 2} \leq \alpha) . \qedhere
\end{align*}
\end{proof}
For a non-empty and open set $G\subset M$, and $S\subset G$ open, we define the quadratic form $\cE^{G,S}$ on $L^2 (G \setminus \overline S)$ by
\begin{align*}
 \cD (\cE^{G,S}) &= \overline{\{u \in L^2 (G \setminus \overline S) \colon \hat u \in C^1 (G), \ u \in C_{\mathrm{c}} (\overline G \setminus \overline S) \}}^{W^{1,2} (G \setminus \overline S)}, \\
 \cE^{G,S} (f)&= \int_{G \setminus \overline S} \vert \nabla f\vert^2\dvol.
\end{align*}
Here, $\hat u \in L^2 (G)$ is the extension of $u$ by zero, i.e.\ $\hat u = u$ on $G \setminus \overline S$ and $\hat u = 0$ on $\overline S$, and $u \in C_{\mathrm{c}} (\overline G \setminus \overline S)$ means that there is $\overline u \in C_{\mathrm{c}} (\overline G \setminus \overline S)$ such that $\overline u= u$ on $G \setminus \overline S$. The form $\cE^{G,S}$ is densely defined, symmetric and closed, and we denote the unique self-adjoint operator in $L^2 (G \setminus \overline S)$ associated with the form $\cE^{G,S}$ by $\Delta^{G,S}$ and its domain by $\cD (\Delta^{G,S})$. Note that this way we describe Dirichlet boundary conditions on $\partial S$ and Neumann boundary conditions at the boundary of $G$. If $G = M$, the operator $\Delta^{G,S}$ is merely the Dirichlet Laplacian on $L^2 (M \setminus \overline S)$.
\par
In order to formulate the following proposition, we denote by $\smash{\Eins_{S_\rho \setminus \overline S}} \colon L^2 (M \setminus \overline S) \to L^2 (M \setminus \overline S)$ and $\smash{\Eins_{S_\rho}} \colon L^2 (M) \to L^2 (M)$ the  multiplication operator by the characteristic function of $S_\rho \setminus \overline S$ and $S_\rho$, respectively. Moreover, if we are concerned with bounded operators on $L^2 (M \setminus \overline S)$, we interpret them as operators on $L^2(M)$ by setting them to zero on $L^2 (S)$.
\begin{proposition} \label{prop:norm-difference}
Let $S \subset M$ be open, $\alphamax,\rho > 0$, $\alpha \in (0,\alphamax)$, and $\beta \geq 0$. Then we have
\begin{align*}
\Bigl\Vert 
\euler^{-\alphamax(\frac 12 \Delta + \beta \Eins_{S_\rho})}-\euler^{-\alphamax(\frac 12 \Delta^{M,S} + \beta \Eins_{S_\rho \setminus \overline S})}
\Bigr\Vert^2
\leq 
\euler^{-2\beta\alpha}+\sup_{z \in M} \PP_z (\tau_{\rho / 2} \leq \alpha/2 ) .
\end{align*}
\end{proposition}
\begin{proof}
We follow the proof of \cite[Proposition~3.2]{StollmannS-21}.
Recall the Feynman-Kac formula for the process $(X_t)_{t\geq 0}$ associated to $\Delta$ and the potential $\Eins_{S_\rho}$: for all $t,\beta\geq 0$, $f\in L^2(M)$, and all $x \in M$ we have
\[
\euler^{-t(\frac 12 \Delta+\beta\Eins_{S_\rho})}f(x)=\E_x\left(f\circ X_{t/2}\exp\left(-2\beta\int_0^{t/2} \Eins_{S_\rho}\circ X_s\drm s\right)\right) ,
\]
as well as
\[
\euler^{-t(\frac 12 \Delta^{M,S}+\beta\Eins_{S_\rho\setminus \overline{S}})}f(x)=\E_x\left(f\circ X_{t/2}\exp\left(-2\beta\int_0^{t/2} \Eins_{S_\rho}\circ X_s\drm s\right)\Eins_{\{\sigma_S>t/2\}}\right).
\]
Specifying $t = \alpha_0$ implies
\begin{align*}
\bigl\vert  \euler^{-\alphamax(\frac 12  \Delta+\beta\Eins_{S_\rho})} f(x)- & \euler^{-\alphamax(\frac 12 \Delta^{M,S}+\beta\Eins_{S_\rho\setminus \overline{S}})}  f(x) \bigr\vert  
=\bigl\vert\E_x\bigl(f\circ X_{\alphamax/2}\euler^{-2\beta T^{\alphamax/2}_{S,\rho}}\Eins_{\{\sigma_S\leq\alphamax/2\}}\bigr)\bigr\vert.
\end{align*}
The Cauchy-Schwarz inequality yields
\begin{multline*}
\Bigl\vert \Bigl(\euler^{-\alphamax(\frac 12 \Delta+\beta\Eins_{S_\rho})}-\euler^{-\alphamax(\frac 12 \Delta^{M,S}+\beta\Eins_{S_\rho\setminus \overline{S}})}\Bigr) f(x)\Bigr\vert^2 
\\
\leq \E_x \Bigl(\vert f\vert^2\circ X_{\alphamax/2} \Bigr)
\E_x\Bigl(\euler^{-4\beta T^{\alphamax/2}_{S,\rho}}\Eins_{\{\sigma_S\leq\alphamax/2\}}\Bigr).
\end{multline*}
Since $\E_x( \vert f\vert^2\circ X_{\alphamax/2})=\euler^{-\alphamax \Delta / 4}\vert f\vert^2(x)$
and $\Vert \euler^{-t\Delta}\Vert\leq 1$ for $t\geq 0$ we conclude
\[
\Bigl\Vert \euler^{-\alphamax(\frac 12 \Delta+\beta\Eins_{S_\rho})}-\euler^{-\alphamax(\frac 12 \Delta^{M,S}+\beta\Eins_{S_\rho\setminus \overline{S}})} \Bigr\Vert
\leq 
\Bigl(\sup_{x\in M}\E_x\bigl(\euler^{-4\beta T^{\alphamax/2}_{S,\rho}}\Eins_{\{\sigma_S\leq\alphamax/2\}}\bigr)\Bigr)^{1/2}.
\]
Since
\[
\euler^{-4\beta T^{\alphamax/2}_{S,\rho}} \Eins_{\{\sigma_S\leq \alphamax/2\}} 
\leq 
\euler^{-2\beta\alpha} + \Eins_{\{\sigma_S\leq \alphamax/2\} \cap \{T^{\alphamax/2}_{S,\rho} \leq \alpha/2\}},
\]
we conclude from Lemma~\ref{lemma:hitrun}
\begin{align*}
\Bigl\Vert 
\euler^{-\alphamax(\frac12\Delta  + \beta \Eins_{S_\rho})} - \euler^{-\alphamax(\frac 12\Delta^{M,S} + \beta \Eins_{S_\rho\setminus \overline{S}})}
\Bigr\Vert^2
&\leq 
\euler^{-2\beta\alpha} + \sup_{x \in M} \PP_x(\sigma_S\leq \alpha_0/2, T_{S,\rho}^{\alphamax/2}\leq \alpha/2)\\
&\leq 
\euler^{-2\beta\alpha}+\sup_{z \in M} \PP_z (\tau_{\rho / 2} \leq \alpha/2) . \qedhere
\end{align*}
\end{proof}
The following lemma is a generalization of the spectral theoretic quantitative unique continuation estimate developed in \cite{BoutetdeMonvelLS-11} (in the case $V = 0$), and refined in \cite{Klein-13,TautenhahnV-20}. Let us stress that our result can be applied to operators defined via form sums where the negative part is relatively bounded with respect to $\Delta$.
\begin{lemma}\label{lemma:abstractlifting}Let $X$ be a complex Hilbert space, $h_1$, $h_2^+$, $h_2^-$, $h_3$ lower bounded quadratic forms on $X$, $h_2^+,h_2^-\geq 0$, $h_2:=h_2^+-h_2^-$, $a'\in(0,1)$, $b'\geq 0$,
\[
h_2^-(x)\leq a' (h_1+h_2^+)(x)+b' \Vert x\Vert^2,\quad x\in D(h_1)\cap D(h_2), 
\]
$E_0\in\R$, $\beta>0$, 
\[
Y:=\left\{x\in D(h_1)\cap D(h_2)\cap D(h_3)\colon h_1(x)+h_2(x)\leq E_0\Vert x\Vert^2\right\},
\]
and 
\[
\gamma(\beta):=\inf\left\{\frac{(h_1+\beta h_3)(x)}{\Vert x\Vert^2}\colon x\in D(h_1)\cap D(h_3)\setminus\{0\}\right\}.
\]
Then we have 
\[
h_3(x)\geq \frac{1}{\beta}\left((1-a')\gamma\left(\frac{\beta}{1-a'}\right)-E_0-b'\right)\Vert x\Vert^2, \quad x\in Y.
\]
\end{lemma}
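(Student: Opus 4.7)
The plan is to combine the relative form bound on $h_2^-$ with the definition of $\gamma$ in a purely algebraic fashion; there is no functional-analytic subtlety because we work directly at the level of form values on a fixed vector $x$.

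First I would fix $x\in Y$ and exploit the relative bound to decouple $h_1+h_2^+$ from $h_2^-$. Explicitly, the inequality $h_1(x)+h_2(x)\leq E_0\Vert x\Vert^2$ rewrites as
\[
h_1(x)+h_2^+(x)\leq E_0\Vert x\Vert^2+h_2^-(x),
\]
and the assumed $(a',b')$-relative bound on $h_2^-$ absorbs the last term to yield
\[
(1-a')\bigl(h_1(x)+h_2^+(x)\bigr)\leq (E_0+b')\Vert x\Vert^2.
\]
Since $h_2^+\geq 0$ this in particular gives the upper bound $h_1(x)\leq (E_0+b')(1-a')^{-1}\Vert x\Vert^2$, which is the one piece of information about $x\in Y$ that I will feed into the variational estimate.

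Next I would invoke the definition of $\gamma$ with coupling $\beta/(1-a')$ in place of $\beta$: for every $x\in D(h_1)\cap D(h_3)\setminus\{0\}$,
\[
h_1(x)+\frac{\beta}{1-a'}\,h_3(x)\geq \gamma\!\left(\frac{\beta}{1-a'}\right)\Vert x\Vert^2.
\]
Rearranging and substituting the upper bound for $h_1(x)$ obtained in the previous step gives
\[
\frac{\beta}{1-a'}\,h_3(x)\geq \gamma\!\left(\frac{\beta}{1-a'}\right)\Vert x\Vert^2-\frac{E_0+b'}{1-a'}\Vert x\Vert^2,
\]
and multiplying through by $(1-a')/\beta$ produces the asserted inequality.

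The argument is essentially a two-line algebraic manipulation, so there is no real obstacle; the only point that requires care is that the relative bound is applied on the domain $D(h_1)\cap D(h_2)$ where $h_2^-(x)$ is controlled, which is exactly the domain on which $Y$ is defined, and that $\gamma$ is evaluated at the rescaled coupling $\beta/(1-a')$ rather than at $\beta$ itself—this rescaling is what makes the relative-boundedness factor $(1-a')$ appear outside $\gamma$ in the final estimate. I would finish by noting that the conclusion is vacuous on the zero vector, so the quantifier $x\in Y$ includes $x=0$ without issue.
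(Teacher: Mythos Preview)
Your proof is correct and follows essentially the same path as the paper's: both combine the constraint defining $Y$, the relative bound on $h_2^-$, the non-negativity of $h_2^+$, and the variational definition of $\gamma(\beta/(1-a'))$. The only difference is organizational---the paper runs a single chain of inequalities starting from $\beta h_3(x)$, whereas you first isolate the intermediate bound $(1-a')h_1(x)\leq (E_0+b')\Vert x\Vert^2$ and then substitute it into the variational inequality; the algebraic content is identical.
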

\begin{remark}The forms $h_2^+$ and $h_2^-$ do not necessarily have to be the positive and negative part of $h_2$. It suffices that they are lower bounded and that $h_2^-$ is relatively bounded with respect to~$h_1+h_2^+$. 
\end{remark}
\begin{proof}We follow and modify the proof of \cite[Lemma~3.5]{TautenhahnV-20}. By the definition of $\gamma(\beta)$, we have $(h_1+\beta h_3)(x)\geq \gamma(\beta)\Vert x\Vert^2$ for all $x\in \dom(h_1)\cap\dom(h_3)$. Hence, for all $x\in Y$
\begin{align*}
\beta h_3(x)
&\geq \beta h_3(x)-E_0\Vert x\Vert^2 +h_1(x)+h_2(x)
\\
&= \beta h_3(x)-E_0\Vert x\Vert^2 +h_1(x)+h_2^+(x)-h_2^-(x)
\\
&\geq \beta h_3(x)-E_0\Vert x\Vert^2 +h_1(x)+h_2^+(x)-a' h_1(x)-h_2^+(x)-b'\Vert x\Vert^2
\\
&= (1-a')\left(h_1(x)+\frac{\beta}{1-a'} h_3(x)\right)-(E_0+b') \Vert x\Vert^2
\\
&\geq \bigl((1-a')\gamma(\beta/(1-a'))-E_0 -b' \bigr)\Vert x\Vert^2.
\end{align*}
Dividing by $\beta$ yields the claim. 
\end{proof}
\begin{theorem} \label{thm:UCP-abstrakt}
Let $\rho, \alphamax,\beta>0$, $\alpha\in(0,\alphamax)$, $E_0\in\R$, and $I \subset (-\infty , E_0]$, $S \subset M$ be open, $\mu_0 := \inf \sigma (\Delta^{M,S})$, and $\lambda_{M , S_\rho} := \inf \sigma (\Delta^{M , S_\rho})$. Then we have
\[
\chi_I (H) \Eins_{S_\rho} \chi_I (H) \geq \kappa \chi_I (H),
\]
where
\[
\kappa  =\frac{1}{\beta}\left[(1-a)\left(\mu_{0} - \frac2\alphamax\euler^{\alphamax\lambda_{M , S_\rho} / 2}\left(\euler^{-\frac{\beta\alpha}{2(1-a)}}+\sqrt{\sup_{z \in M} \PP_z (\tau_{\rho / 2} \leq \alpha/2 )}\right)\right)-E_0-b\right],
\]
and $a\in[0,1)$ and $b\geq 0$ are as in \eqref{eq:relative}.
\end{theorem}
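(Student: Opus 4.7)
The plan is to feed the semigroup comparison from Proposition~\ref{prop:norm-difference} together with information on the Dirichlet spectral bottom into the abstract spectral lifting of Lemma~\ref{lemma:abstractlifting}. I set $h_1 = \cE$, $h_2^\pm = \mathcal{V}_\pm$, and $h_3(f) = \langle \Eins_{S_\rho} f, f\rangle$ on $X = L^2(M)$, so that $h_1 + h_2^+ - h_2^-$ is the form of $H$ and the relative-bound assumption \eqref{eq:relative} supplies exactly $a' = a$, $b' = b$. For any $f \in \Ran \chi_I(H)$ with $I \subset (-\infty, E_0]$, the spectral theorem places $f$ in the set $Y$ of Lemma~\ref{lemma:abstractlifting}, which therefore yields
\[
\langle \Eins_{S_\rho} f, f\rangle \geq \frac{1}{\beta}\Bigl[(1-a)\gamma\bigl(\tfrac{\beta}{1-a}\bigr) - E_0 - b\Bigr]\|f\|^2,
\]
where $\gamma(\beta') := \inf\sigma(\Delta + \beta'\Eins_{S_\rho})$. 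The whole problem thus reduces to producing a lower bound on $\gamma(\beta/(1-a))$ of the form demanded by $\kappa$.

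For this, I rewrite $\tfrac{1}{2}(\Delta + \beta'\Eins_{S_\rho}) = \tfrac{1}{2}\Delta + \tfrac{\beta'}{2}\Eins_{S_\rho}$ and apply Proposition~\ref{prop:norm-difference} with coupling $\beta'/2$:
\[
\bigl\|\euler^{-\alphamax(\Delta + \beta'\Eins_{S_\rho})/2} - \euler^{-\alphamax(\Delta^{M,S} + \beta'\Eins_{S_\rho\setminus\overline S})/2}\bigr\|^2 \leq \euler^{-\beta'\alpha} + \sup_{z\in M}\PP_z(\tau_{\rho/2}\leq\alpha/2).
\]
The norm of the first semigroup equals $\euler^{-\alphamax\gamma(\beta')/2}$. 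The second semigroup is extended by zero to $L^2(S)$ and, because $\Eins_{S_\rho\setminus\overline S}\geq 0$ forces $\Delta^{M,S}+\beta'\Eins_{S_\rho\setminus\overline S}\geq\Delta^{M,S}$, its norm is bounded by $\euler^{-\alphamax\mu_0/2}$. Combining with the triangle inequality and $\sqrt{a+b}\leq\sqrt a+\sqrt b$ gives
\[
\euler^{-\alphamax\gamma(\beta')/2} \leq \euler^{-\alphamax\mu_0/2} + \euler^{-\beta'\alpha/2} + \sqrt{\sup_{z\in M}\PP_z(\tau_{\rho/2}\leq\alpha/2)}.
\]

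Inverting this exponential inequality is the delicate step. Applying the mean value theorem to $F(x) = \euler^{-\alphamax x/2}$, in the nontrivial case $\gamma(\beta')<\mu_0$ some $\xi\in(\gamma(\beta'),\mu_0)$ satisfies $(\alphamax/2)\euler^{-\alphamax\xi/2}(\mu_0-\gamma(\beta'))\leq \euler^{-\beta'\alpha/2}+\sqrt{\sup\ldots}$. A simple min-max comparison using test functions supported in $M\setminus\overline{S_\rho}$ (which are admissible for both the form of $\Delta + \beta'\Eins_{S_\rho}$ on $M$ and for $\Delta^{M,S}$ on $M\setminus\overline S$, on which they vanish against $\Eins_{S_\rho}$ and $\Eins_{S_\rho\setminus\overline S}$ respectively) shows $\gamma(\beta'),\mu_0\leq\lambda_{M,S_\rho}$, hence $\xi\leq\lambda_{M,S_\rho}$. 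The resulting uniform bound $\euler^{\alphamax\xi/2}\leq\euler^{\alphamax\lambda_{M,S_\rho}/2}$ produces
\[
\gamma(\beta') \geq \mu_0 - \frac{2}{\alphamax}\euler^{\alphamax\lambda_{M,S_\rho}/2}\Bigl(\euler^{-\beta'\alpha/2} + \sqrt{\sup_{z\in M}\PP_z(\tau_{\rho/2}\leq\alpha/2)}\Bigr).
\]
Substituting $\beta'=\beta/(1-a)$ into the conclusion of Lemma~\ref{lemma:abstractlifting} yields precisely the value of $\kappa$ asserted. The main obstacle is this inversion step: one needs a uniform upper bound on the intermediate value $\xi$ independent of $\beta'$, and bounding $\xi$ by $\lambda_{M,S_\rho}$ rather than $\mu_0$ is what produces the clean constant $\euler^{\alphamax\lambda_{M,S_\rho}/2}$ in the statement.
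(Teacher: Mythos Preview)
Your argument is correct and follows essentially the same route as the paper: apply Lemma~\ref{lemma:abstractlifting} with $h_1=\cE$, $h_2^\pm=\mathcal V_\pm$, $h_3=\langle \Eins_{S_\rho}\cdot,\cdot\rangle$, then bound $\gamma(\beta')=\inf\sigma(\Delta+\beta'\Eins_{S_\rho})$ from below via Proposition~\ref{prop:norm-difference}, the (reverse) triangle inequality, and the mean value theorem, using the domination $\xi\le\lambda_{M,S_\rho}$ to invert. The only cosmetic difference is that the paper keeps $\mu_{2t}:=\inf\sigma(\Delta^{M,S}+2t\Eins_{S_\rho\setminus\overline S})$ in the MVT step and invokes $\mu_{2t}\ge\mu_0$ afterwards, whereas you absorb $\mu_{2t}\ge\mu_0$ into the semigroup norm bound first and run the MVT between $\gamma(\beta')$ and $\mu_0$; both variants use the same form-domain inclusion to get $\mu_0\le\mu_{2t}\le\lambda_{M,S_\rho}$.
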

\begin{proof}
We apply Lemma~\ref{lemma:abstractlifting} to the quadratic forms $h_1=\cE$, $h_2^+=\mathcal{ V}_+$, $h_2^-=\mathcal{ V}_-$, and $h_3\colon L^2(M)\to \R$,  $u\mapsto\langle \Eins_{S_\rho} u, u\rangle$, and obtain
\[
\langle\Eins_{S_\rho}u,u\rangle\geq \frac{1}{\beta}\left((1-a)\lambda_{\beta_a}-E_0-b\right)\langle u,u\rangle, \quad u\in Y,
\]
where 
\begin{align*}
Y
&=\left\{u\in \dom(\mathcal{E})\cap \dom(\mathcal{V}_+)\cap \dom(\mathcal{V}_-)\cap \dom(h_3)\colon (\mathcal{E}+\mathcal{V}_+-\mathcal{V}_-)(u)\leq E_0\langle u,u\rangle\right\}
\\
&=\left\{u\in \dom(\mathcal{E})\cap\dom(\mathcal{V}_+)\colon \mathcal{H}(u)\leq E_0\langle u,u\rangle\right\}.
\end{align*}
and
\[
 \lambda_t := \inf \sigma (\Delta + t \Eins_{S_\rho}), \quad t\geq 0,\quad \beta_a:=\frac{\beta}{1-a}.
\]
If $u\in \operatorname{Ran}(\chi_I(H))$ for some $I\subset (-\infty, E_0]$, then $u\in \dom(H)$ and $\mathcal{H}(u)\leq E_0\langle u,u\rangle$. Then we have $u\in Y$. 
Thus, the statement follows if  
\begin{equation}\label{proof:abstract}
((1-a)\lambda_{\beta_a} - E_0-b) / \beta \geq \kappa.
\end{equation}
 In order to show this inequality, we set 
\begin{align*}
\mu_t := \inf \sigma (\Delta^{M , S} +  t\Eins_{S_\rho \setminus \overline S}),\quad t\geq 0.
\end{align*}
By the choice of our boundary conditions we have $\lambda_t \leq \mu_t \leq \lambda_{M , S_\rho}$ for all $t\geq0$. 
Fix $t\geq 0$ to be chosen later. Since we have  $$\left\lVert \euler^{-\alphamax(\Delta / 2 + t \Eins_{S_\rho})} \right\rVert = \euler^{-(\alphamax/2) \lambda_{2 t}},\quad
\text{and}\quad\left\lVert \euler^{-\alphamax(\Delta^{M,S} / 2 + t \Eins_{S_\rho \setminus \overline S})} \right\rVert = \euler^{-(\alphamax/2) \mu_{2 t}},$$ the reverse triangle inequality, $\sqrt{a+b}\leq \sqrt a +\sqrt b$ for $a,b\geq0$,  and Proposition~\ref{prop:norm-difference} imply for $\alpha\in(0,\alphamax)$ 
\begin{equation}\label{eq:application-prop22}
 \euler^{-(\alphamax/2)\lambda_{2t}}-\euler^{-(\alphamax/2)\mu_{2t}}\leq 
 \euler^{-t\alpha}+\sqrt{\sup_{z \in M} \PP_z (\tau_{\rho / 2} \leq \alpha/2 )} .
\end{equation}
By the mean value theorem there exists $\xi\in[\alphamax\lambda_{2t} / 2,\alphamax\mu_{2t}/2]$ such that
\begin{equation} \label{eq:mean-value}
\frac{\alphamax}{2}\mu_{2t}- \frac{\alphamax}{2}\lambda_{2t}=\euler^\xi(\euler^{-\alphamax\lambda_{2t} /2}-\euler^{-\alphamax\mu_{2t} / 2})\leq \euler^{\alphamax\lambda_{M , S_\rho} / 2}(\euler^{-\alphamax\lambda_{2t}/2}-\euler^{-\alphamax\mu_{2t}/2}).
\end{equation}
From \eqref{eq:application-prop22}, \eqref{eq:mean-value}, $\mu_{2t} \geq \mu_0$, and $a\in[0,1)$ we conclude for all $\alpha \in (0,\alphamax)$ 

\begin{equation*} 
(1-a)\lambda_{2t}\geq (1-a)\left(\mu_{0}- \frac{2}{\alphamax}\euler^{\alphamax\lambda_{M , S_\rho} /2}\left(\euler^{-t\alpha}+\sqrt{\sup_{z \in M} \PP_z (\tau_{\rho / 2} \leq \alpha/2 )}\right)\right)
.
\end{equation*}
We choose $t=\beta_a/2$ and conclude \eqref{proof:abstract}.
\end{proof}
Theorem~\ref{thm:UCP-abstrakt} yields a non-trivial result only if $\kappa>0$. In the following sections we exploit certain geometric properties of $M$ and $S\subset M$ yielding the positivity of $\kappa$.
\section{Spectral estimates for Laplacians on domains with relatively dense complement}\label{section:spectral}
A set $G \subset M$ is called \emph{star-shaped with respect to $x \in G$} if for all $y \in G$ distance minimizing geodesics from $x$ to $y$ are contained in $G$. 
For $x\in M$ let $\omega=\omega_x \colon (0,\infty) \times \bS_x^{n-1} \to [0,\infty)$ be the density of the volume form with respect to the product measure on $(0,\infty) \times \bS_x^{n-1}$, i.e.,
 \[
  \dvol= \omega(r,\theta)\drm r \drm \theta=\omega_x(r,\theta)\drm r \drm \theta,
 \]
 cf.~\cite{GallotHL-87}.
Note that $\omega$ is almost everywhere positive and finite and smooth up to the cut locus. We introduce the function
\[
\Lambda\colon M\times [0,\infty)\times [0,\infty)\to [0,\infty), \quad (x,\rho,R)\mapsto \left(\esssup_{\theta\in\bS_x^{n-1}}\int_\rho^{R}\int_\rho^r \frac{\omega(r,\theta)}{\omega(s,\theta)}\ \drm s \ \drm r\right)^{-1}.
\]
Then we have the following result.
\begin{theorem}\label{prop:lower1} Let $x\in M$, $0<\rho<R<\infty$, $G\subset M$ open and star-shaped with respect to $x$, $\ball{x}{\rho}\subset G\subset \ball{x}{R}$.
Then we have 
\[
\lambda_{G,\ball{x}{\rho}}:= \inf \sigma (\Delta^{G,\ball{x}{\rho}})\geq \Lambda(x,\rho,R).
\]
\end{theorem}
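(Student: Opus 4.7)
The strategy is to establish a weighted one-dimensional Hardy-type inequality along each geodesic ray emanating from $x$ and then integrate over the sphere $\mathbb{S}_x^{n-1}$. Star-shapedness of $G$ with respect to $x$ is exactly what allows us to parametrize $G \setminus \{x\}$ in geodesic polar coordinates $(r,\theta) \in (0,\infty) \times \mathbb{S}_x^{n-1}$ centered at $x$: for each direction $\theta$ the set $G$ intersects the ray in a segment $[0, R(\theta))$ with $\rho \leq R(\theta) \leq R$, using $B_\rho(x) \subset G \subset B_R(x)$. The volume form factorizes as $\omega(r,\theta)\,dr\,d\theta$ almost everywhere, which is all we need since cut loci have measure zero.

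By the variational principle it suffices to prove that
\[
 \lVert u \rVert_{L^2(G\setminus\overline{B_\rho(x)})}^2 \leq \Lambda(x,\rho,R)^{-1}\,\mathcal{E}^{G,B_\rho(x)}(u)
\]
for $u$ in a dense subspace of $\mathcal{D}(\mathcal{E}^{G,B_\rho(x)})$. Pick $u$ smooth with compact support in $\overline{G}\setminus\overline{B_\rho(x)}$; its zero-extension across $\partial B_\rho(x)$ gives $u(\rho,\theta) = 0$, and no boundary condition is needed at $\partial G$ since $\partial G$ carries Neumann boundary conditions. Along each ray write
\[
 u(r,\theta) = \int_\rho^r \partial_s u(s,\theta)\,ds,
\]
and apply the weighted Cauchy-Schwarz inequality with weights $\omega(s,\theta)^{\pm 1/2}$ to obtain
\[
 u(r,\theta)^2 \leq \Bigl(\int_\rho^r \frac{ds}{\omega(s,\theta)}\Bigr) \Bigl(\int_\rho^{R(\theta)} (\partial_s u(s,\theta))^2\,\omega(s,\theta)\,ds\Bigr).
\]

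Multiplying by $\omega(r,\theta)$, integrating $r$ over $[\rho,R(\theta)]$ and then extending to $[\rho,R]$ (which only enlarges the inner double integral), the ray estimate becomes
\[
 \int_\rho^{R(\theta)} u(r,\theta)^2\,\omega(r,\theta)\,dr \leq \Lambda(x,\rho,R)^{-1}\int_\rho^{R(\theta)} (\partial_s u(s,\theta))^2\,\omega(s,\theta)\,ds
\]
by definition of $\Lambda(x,\rho,R)$ via the essential supremum over $\theta$. Integrating against $d\theta$ on $\mathbb{S}_x^{n-1}$ and using $(\partial_r u)^2 \leq |\nabla u|^2$ (since in geodesic polar coordinates $g = dr^2 \oplus h_r$) yields
\[
 \lVert u\rVert_{L^2(G\setminus\overline{B_\rho(x)})}^2 \leq \Lambda(x,\rho,R)^{-1}\int_{G\setminus\overline{B_\rho(x)}} |\nabla u|^2\,\mathrm{dvol} = \Lambda(x,\rho,R)^{-1}\,\mathcal{E}^{G,B_\rho(x)}(u).
\]
Density of such $u$ in $\mathcal{D}(\mathcal{E}^{G,B_\rho(x)})$ and the variational characterization of the spectral bottom conclude the proof.

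\textbf{Main obstacle.} The delicate point is the rigorous use of polar coordinates in the presence of the cut locus: $\omega(r,\theta)$ is smooth only up to the first conjugate or cut point along $\theta$. Star-shapedness guarantees that for each $y \in G$ a minimizing geodesic from $x$ stays inside $G$, so the polar parametrization covers $G \setminus \{x\}$ up to a set of measure zero, and the coarea/polar decomposition of $\mathrm{dvol}$ together with the fact that $\omega$ is $L^1_{\mathrm{loc}}$ and almost everywhere positive makes all the integrations legitimate. Handling this carefully, and ensuring that the Cauchy-Schwarz step is valid where $\omega$ could degenerate on a null set beyond the cut locus, is the main technicality of the argument.
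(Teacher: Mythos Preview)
Your proposal is correct and follows essentially the same route as the paper's proof: variational principle, geodesic polar coordinates centered at $x$, the fundamental theorem of calculus along each ray using the Dirichlet condition at $r=\rho$, a weighted Cauchy--Schwarz inequality with weight $\omega(s,\theta)^{\pm 1/2}$, the bound $(\partial_r u)^2\le|\nabla u|^2$, and finally integration over $\mathbb{S}_x^{n-1}$ together with the essential supremum defining $\Lambda(x,\rho,R)$. The paper handles the cut-locus issue you flag by explicitly integrating only up to $L_\theta\wedge R_\theta$ (with $L_\theta$ the cut distance in direction $\theta$), which is exactly the rigorous version of your measure-zero remark.
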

\begin{proof}[Proof of Theorem~\ref{prop:lower1}]
Let $B = \ball{x}{\rho}$. By the variational principle we have
\[
\lambda_{G,B} = \inf\left\{ \frac{\int_{G\setminus \overline{B}}\vert \nabla f \vert^2\dvol}{\int_{G\setminus \overline{B}} \lvert f \rvert^2\dvol}\colon f \in \mathcal{F} \right\}, 
\]
where
\[
\mathcal{F} = \{ f \colon G \setminus \overline B \to \C \colon f \in \mathcal{L}^2 (G \setminus \overline{B}), \ \hat f \in C^1 (G), \ f \in C_{\mathrm{c}} (\overline G \setminus \overline{B})\} .
\]
Here, $\hat f \in L^2 (G)$ is the extension of $f$ by zero, and $f \in C_{\mathrm{c}} (\overline G \setminus \overline{B})$ means that there is $\overline f \in C_{\mathrm{c}} (\overline G \setminus \overline{B})$ such that $\overline f= f$ on $G \setminus \overline{B}$.
Thus, it suffices to prove an ($f$-independent) lower bound on $$\lVert \nabla f \rVert^2_{L^2 (G \setminus \overline{B})} / \lVert f \rVert^2_{L^2 (G \setminus \overline{B})}$$ for all $f \in \cF$. We fix $f \in \mathcal{F}$.
For $\theta \in \bS_x^{n-1}$ we denote by $\gamma_{x}^\theta \colon [0,\infty)\to M$ the geodesic with $\gamma_x^\theta(0)=x$, $(\gamma_x^\theta)'(0)=\theta$, and $L_\theta\geq 0$ be the distance to the cut-locus of $x$ in direction $\theta$. 
Moreover, we let $R_\theta = \sup\{r\in (0,R] \colon \forall s\in[0,r]\colon \gamma_x^\theta(s)\in G\}$. Since $G$ is star-shaped with respect to $x$ we have $\gamma_x^\theta(r) \in G$ for all $r \in [0,R_\theta)$.
By the  Cauchy-Schwarz inequality we obtain for all $\theta\in S_x^{n-1}$ and all $r\in[\rho,R_\theta\wedge L_\theta)$ 
\begin{align*}
 \vert (f\circ\gamma_x^\theta)(r) \rvert^2
 =
 \left\lvert \int_\rho^r \partial_s (f\circ \gamma_x^\theta)(s)\ \drm s \right\rvert^2
 &=
 \left\lvert \int_\rho^r \partial_s (f\circ \gamma_x^\theta)(s)\omega(s,\theta)^{\frac 12}\omega(s,\theta)^{-\frac 12}\ \drm s \right\rvert^2 \\
 & \leq \int_\rho^r \vert \partial_s (f\circ \gamma_x^\theta)\vert^2(s)\omega(s,\theta)\ \drm s \int_\rho^r\omega(s,\theta)^{-1}\ \drm s .
\end{align*}
By the chain rule, Cauchy-Schwarz, and since $\gamma_x^\theta$ is parametrized by arc length, we have for all $s\in[0, R_\theta)$ 
\[
\vert \partial_s (f\circ\gamma_x^\theta)(s)\vert^2
=\vert\langle  \nabla f(\gamma_x^\theta(s)),\dot\gamma_x^\theta(s)\rangle\vert^2
\leq \vert \nabla f(\gamma_x^\theta(s))\vert^2\lvert \dot \gamma_x^\theta (s) \rvert^2=\vert \nabla f(\gamma_x^\theta(s))\vert^2.
\]
Integration along distance spheres gives 
\begin{align*}
\Vert f& \Vert_{L^2(G \setminus \overline B)}^2
= \int_{\bS_x^{n-1}}\int_\rho^{L_\theta\wedge R_\theta}\vert f\circ \gamma_x^\theta(r)\vert^2\omega(r,\theta)\ \drm r\ \drm \theta
\\
&\leq \int_{\bS_x^{n-1}}\int_\rho^{L_\theta\wedge R_\theta}\omega(r,\theta)\int_\rho^r \vert \partial_s(f\circ\gamma_x^\theta)(s)\vert^2\omega(s,\theta)\ \drm s \int_\rho^r \omega(s,\theta)^{-1}\ \drm s\  \drm r \ \drm \theta
\\
&\leq \int_{\bS_x^{n-1}}\int_\rho^{L_\theta\wedge R_\theta}\omega(r,\theta)\int_\rho^{\rho(\theta)\wedge R_\theta} \vert \nabla f(\gamma_x^\theta(s))\vert^2\omega(s,\theta)\ \drm s \int_\rho^r \omega(s,\theta)^{-1}\ \drm s\  \drm r \ \drm \theta
\\
&=\int_{\bS_x^{n-1}}\int_\rho^{L_\theta\wedge R_\theta}\int_\rho^r \frac{\omega(r,\theta)}{\omega(s,\theta)}\ \drm s\  \drm r \int_\rho^{L_\theta\wedge R_\theta} \vert \nabla f(\gamma_x^\theta(s))\vert^2\omega(s,\theta)\ \drm s \ \drm \theta
\\
&\leq \esssup_{\theta\in\bS_x^{n-1}}\int_\rho^{L_\theta\wedge R_\theta}\int_\rho^r \frac{\omega(r,\theta)}{\omega(s,\theta)}\ \drm s \ \drm r\  \Vert \nabla f\Vert_{L^2(G \setminus \overline B)}^2.
\end{align*}
Since $L_\theta\wedge R_\theta\leq R$ this yields the claim.
\end{proof}
\begin{proposition} \label{prop:skeleton}
Assume $0 < \rho < R < \infty$ and let $S\subset M$ be $(R,\rho)$-relatively dense. Then, there is  $\Sigma\subset S$ with the following properties:
 \begin{enumerate}[(a)]
  \item $B_\rho(\Sigma):= \bigcup_{p\in\Sigma} B_\rho(p)$ is $(3R,\rho)$-relatively dense and $B_\rho(\Sigma)\subset S$.
  \item $\bigcup_{p\in\Sigma} B_{3R}(p)\supset M$.
  \item If $p\in\Sigma$ and $\Sigma\setminus\{p\}\neq \emptyset$, then
  \[
   R\leq \dist(p,\Sigma\setminus\{p\})\leq 6R.
  \]
  In particular, $\Sigma$ can be chosen uniformly discrete and $B_\rho(\Sigma\setminus\{p\})$ is $(6R,\rho)$-relatively dense in $M$.
 \end{enumerate}
\end{proposition}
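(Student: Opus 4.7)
The plan is to construct $\Sigma$ by a Zorn-type argument applied to the ``good'' subset
\[
T := \{p \in S : B_\rho(p) \subset S\},
\]
which is non-empty: applying the $(R,\rho)$-relative density of $S$ at any fixed $x \in M$ produces $y \in S$ with $B_\rho(y) \subset B_R(x) \cap S$, hence $y \in T$. By Zorn's lemma, choose $\Sigma \subset T$ maximal with respect to the property that $\dist(p,q) \geq R$ for all distinct $p,q \in \Sigma$.

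The central technical step is the quantitative covering estimate $\dist(x,\Sigma) < 2R$ for every $x \in M$. Given $x$, relative density yields $z \in T$ with $\dist(x,z) \leq R - \rho$. Maximality of $\Sigma$ precludes $\dist(z,\Sigma) \geq R$, so either $z \in \Sigma$ or some $p \in \Sigma$ satisfies $\dist(z,p) < R$; in either case the triangle inequality gives $\dist(x,\Sigma) < 2R$. Properties (a) and (b) are then routine consequences: for (a), choosing for each $x$ a nearest $p \in \Sigma$ gives $B_\rho(p) \subset B_{2R + \rho}(x) \subset B_{3R}(x)$ (using $\rho < R$), while $B_\rho(p) \subset S$ holds by $p \in T$; (b) is immediate from the same covering bound.

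For (c), the lower bound $\dist(p, \Sigma \setminus \{p\}) \geq R$ is automatic from $R$-separation, from which uniform discreteness follows. For the upper bound I would argue by contradiction: suppose $\dist(p, \Sigma \setminus \{p\}) > 6R$ for some $p \in \Sigma$. Pick any $q \in \Sigma \setminus \{p\}$ and, invoking Hopf-Rinow on the complete manifold $M$, fix a minimizing geodesic $\gamma$ from $p$ to $q$; set $x := \gamma(3R)$. The covering estimate produces $r \in \Sigma$ with $\dist(x,r) < 2R$, so the reverse triangle inequality yields $\dist(p,r) > R > 0$ (hence $r \neq p$), while the direct triangle inequality yields $\dist(p,r) < 5R$, contradicting the hypothesis. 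The final claim on $(6R,\rho)$-relative density of $B_\rho(\Sigma \setminus \{p\})$ then follows by combining the covering estimate with this upper bound through another triangle inequality.

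The main obstacle is the upper bound in (c): one must ensure that removing any single point of $\Sigma$ still leaves a quantitatively dense configuration of $\rho$-balls. The maximal $R$-separation of $\Sigma$ within $T$ is precisely what simultaneously couples the separation bound $R$, the covering bound $2R$, and (via the Hopf-Rinow geodesic argument) the upper bound of order $R$ between neighboring points of $\Sigma$. Arranging a single choice of $\Sigma$ to serve all three roles is the conceptual content; the remaining inequalities reduce to triangle-inequality bookkeeping under the standing hypothesis $\rho < R$.
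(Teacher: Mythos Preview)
Your approach is the standard one and is almost certainly what the paper has in mind: the paper does not give its own argument but simply states that the proof in \cite{SS} ``literally applies to our setting'', and the construction via a maximal $R$-separated subset of $T=\{p\in S:B_\rho(p)\subset S\}$ is exactly that argument. Your derivations of the covering bound $\dist(x,\Sigma)<2R$, of (a), (b), and of the lower bound in (c) are correct, and the Hopf--Rinow geodesic trick for the upper bound in (c) is clean (and in fact yields the sharper $\dist(p,\Sigma\setminus\{p\})<5R$).

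Two small points. First, the intermediate claim $\dist(x,z)\le R-\rho$ from $B_\rho(z)\subset B_R(x)$ is not automatic on a general manifold (think of $B_R(x)=M$); but you only ever use $\dist(x,z)<R$, which is immediate, so the conclusion $\dist(x,\Sigma)<2R$ stands. Second, the very last ``in particular'' about $(6R,\rho)$-relative density of $B_\rho(\Sigma\setminus\{p\})$ does not quite fall out of a single triangle inequality as you suggest: combining $\dist(x,\Sigma)<2R$ with $\dist(p,\Sigma\setminus\{p\})\le 6R$ gives only $\dist(x,\Sigma\setminus\{p\})<8R$, and even with the sharper $5R$ you get $7R$. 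One needs to rerun the geodesic argument from $x$ (not from $p$) at parameter $4R$ to land on some $r\in\Sigma\setminus\{p\}$ with $\dist(x,r)<6R$; even then one obtains $B_\rho(r)\subset B_{6R+\rho}(x)$ rather than $B_{6R}(x)$, so the stated constant is slightly loose. This final clause is not used anywhere in the paper, so it is harmless, but your one-line justification for it is too quick.
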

For a convex subset $M \subset \R^d$, Proposition~\ref{prop:skeleton} is proven in \cite{StollmannS-21}. 
The proof literally applies to our setting.
\par
In the following we give a lower bound of the bottom of the Laplacian in the complement of $S$ in terms of the function $\Lambda$. The idea is taken from \cite{StollmannS-21} with the slight difference that Vorono\"{\i} cells are in general not convex. This property however can be replaced by star-shapedness.
\begin{theorem}\label{eigenvalueestimate}
 Let $0<\rho<R<\infty$  and  $S\subset M$ be a proper $(R,\rho)$-relatively dense subset. Then we have
 \[
\lambda^{M,S} := \inf \sigma (\Delta^{M,S}) \geq \inf_{x\in M}\Lambda(x,\rho/2,3R).
\]
\end{theorem}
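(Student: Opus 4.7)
The strategy is to reduce the global spectral estimate on $M\setminus\overline S$ to local ones on star-shaped cells built around the skeleton furnished by Proposition~\ref{prop:skeleton}. First I would fix a skeleton $\Sigma\subset S$ with $B_\rho(\Sigma)\subset S$, $\bigcup_{p\in\Sigma}B_{3R}(p)=M$, and $\dist(p,q)\geq R$ for distinct $p,q\in\Sigma$. For each $p\in\Sigma$ I would then define the star-shaped cell
\[
G_p := \{\gamma_p^\theta(s)\colon \theta\in\bS_p^{n-1},\ 0\leq s<R_p(\theta)\},
\]
where $R_p(\theta)\in[0,3R]$ is the largest $r$ such that $\gamma_p^\theta|_{[0,r]}$ is the unique minimizer from $p$ and $\gamma_p^\theta(s)$ lies in the Vorono\"i region $V_p:=\{x\in M\colon \dist(x,p)\leq\dist(x,q)\text{ for all }q\in\Sigma\}$ for every $s\in[0,r]$. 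By construction $G_p$ is star-shaped with respect to $p$ and contained in $B_{3R}(p)$; since $\dist(p,q)\geq R$ for $q\in\Sigma\setminus\{p\}$, the triangle inequality forces every geodesic of length less than $R/2$ issuing from $p$ to remain in $V_p$, hence $B_{R/2}(p)\subset G_p$, and because $\rho<R$, we obtain the inclusion $B_{\rho/2}(p)\subset G_p\subset B_{3R}(p)$ required to apply Theorem~\ref{prop:lower1}.

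Next I would verify that $\{G_p\}_{p\in\Sigma}$ covers $M$ up to a set of measure zero. Given $x\in M$ off the (null) union of cut loci of points of $\Sigma$, pick $p\in\Sigma$ minimizing $\dist(x,\Sigma)$, so $d:=\dist(x,p)\leq 3R$ by the covering property, and let $\gamma$ be the unique minimizing geodesic from $p$ to $x$, which exists by Hopf--Rinow because $M$ is complete. For any $q\in\Sigma$ and $s\in[0,d]$ the triangle inequality gives
\[
\dist(\gamma(s),q)\geq\dist(x,q)-\dist(\gamma(s),x)\geq d-(d-s)=s=\dist(\gamma(s),p),
\]
so $\gamma(s)\in V_p$ and hence $x\in G_p$. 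The collection of points equidistant to two distinct elements of the countable set $\Sigma$ is negligible, so the cells are moreover pairwise essentially disjoint.

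The final step is a domain decomposition argument. For $f$ in a core of $\dom(\cE^{M,S})$, that is $\hat f\in C^1(M)$ and $f\in C_{\mathrm c}(M\setminus\overline S)$, set $\tilde f:=\hat f|_{G_p\setminus\overline{B_{\rho/2}(p)}}$. Its zero extension equals $\hat f|_{G_p}\in C^1(G_p)$ and it has compact support in $\overline{G_p}\setminus\overline{B_{\rho/2}(p)}$, so $\tilde f\in\dom(\cE^{G_p,B_{\rho/2}(p)})$. Because $f\equiv 0$ on $\overline S\supset B_{\rho/2}(p)$ and $\nabla\hat f=0$ almost everywhere on $\overline S$ (since $S$ is open and $\partial S$ is negligible), one obtains
\[
\|\tilde f\|_{L^2(G_p\setminus\overline{B_{\rho/2}(p)})}^2 = \|f\|_{L^2(G_p\setminus\overline S)}^2, \qquad \cE^{G_p,B_{\rho/2}(p)}(\tilde f)=\|\nabla f\|_{L^2(G_p\setminus\overline S)}^2.
\]
Combining the variational characterisation of $\lambda_{G_p,B_{\rho/2}(p)}$ with Theorem~\ref{prop:lower1} and summing over $p\in\Sigma$ yields
\[
\inf_{x\in M}\Lambda(x,\rho/2,3R)\,\|f\|_{L^2(M\setminus\overline S)}^2\leq\sum_{p\in\Sigma}\lambda_{G_p,B_{\rho/2}(p)}\|f\|_{L^2(G_p\setminus\overline S)}^2\leq\|\nabla f\|_{L^2(M\setminus\overline S)}^2,
\]
and a density argument together with the variational characterisation of $\lambda^{M,S}$ concludes.

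The main technical obstacle is the construction of the cells $G_p$: in the Euclidean treatment of \cite{SS} one uses genuine Vorono\"i cells, which are convex, but on a general Riemannian manifold such cells are neither convex nor star-shaped due to cut loci and conjugate points. Cutting the cells off by the joint conditions \textit{still uniquely minimizing from $p$} and \textit{still in $V_p$}, and capping the radius by $3R$, repairs star-shapedness; the remaining geometric input is that the resulting cells still exhaust $M$ up to a null set, which is exactly what the triangle-inequality argument in the second paragraph delivers and is also the reason why $\rho/2$ rather than $\rho$ appears in the conclusion.
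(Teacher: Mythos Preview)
Your proposal follows the same overall architecture as the paper: extract a skeleton $\Sigma$ via Proposition~\ref{prop:skeleton}, partition $M$ (up to a null set) into star-shaped cells $G_p$ with $B_{\rho/2}(p)\subset G_p\subset B_{3R}(p)$, apply Theorem~\ref{prop:lower1} on each cell, and sum. The final decomposition step and the density argument are essentially identical to the paper's.

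The one substantive difference is your construction of $G_p$. You explicitly build a star-shaped subset of the Vorono\"{\i} region by cutting along each ray at the cut locus and at the first exit from $V_p$, because you believe that ``on a general Riemannian manifold such cells are neither convex nor star-shaped due to cut loci and conjugate points.'' The paper avoids this entirely: it takes $G_p$ to be the open Vorono\"{\i} cell $\{x:\dist(x,p)<\dist(x,q)\text{ for all }q\in\Sigma\setminus\{p\}\}$ and proves directly that it is star-shaped with respect to $p$. The argument is precisely your own triangle-inequality computation from the second paragraph: if $\gamma$ is a minimizing geodesic from $p$ to $x\in G_p$, then $\dist(\gamma(t),p)=\dist(x,p)-\dist(\gamma(t),x)<\dist(x,q)-\dist(\gamma(t),x)\le\dist(\gamma(t),q)$, so $\gamma(t)\in G_p$. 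Recall that the paper's definition of star-shaped only requires \emph{distance-minimizing} geodesics from $p$ to stay in $G$; the cut locus is irrelevant for this notion, and in the proof of Theorem~\ref{prop:lower1} the cut-locus radius $L_\theta$ is already accounted for separately in the polar integration. Thus your extra truncation is unnecessary, and it also introduces a minor technical nuisance (your $G_p$ need not be open, which Theorem~\ref{prop:lower1} formally requires). The paper's choice of the open Vorono\"{\i} cell sidesteps all of this.
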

\begin{proof}
 Let $\Sigma \subset S$ be as in Proposition~\ref{prop:skeleton}.
 We denote the Vorono\"{\i} cell associated with $p \in \Sigma$ by
 \[
  G_p:= \{x\in M\colon \forall q\in\Sigma \setminus \{p\}\colon \dist(x,p) < \dist(x,q)\} .
 \]
Then, by construction
\begin{enumerate}[(i)]
 \item $\overline{\bigcup_{p\in\Sigma} G_p} =M$,
 \item ${G_p}\cap G_q=\emptyset$ for $p,q\in \Sigma$ with $p\neq q$,
 \item for all $p \in \Sigma$ we have $B_{\rho/2}(p)\subset G_p\subset B_{3R}(p)$, and
 \item for all $p \in \Sigma$ the Vorono\"{\i} cell $G_p$ is star-shaped with respect to $p$.
\end{enumerate}
Note that $G_p$ is in general not convex. In order to see the star-shapedness, let $x \in G_p$ and $\gamma \colon [0,1] \to M$ be a distance minimizing geodesic from $p$ to $x$. Then for all $t \in [0,1]$ we have 
\[
 \dist (\gamma (t),p) = \dist  (x,p) - \dist (\gamma (t),x)
\]
and consequently for all $q \in \Sigma$ with $q \not = p$
\[
\dist (\gamma(t),p)=\dist (x,p)-\dist (\gamma(t),x)<\dist (x,q)-\dist(\gamma(t),x)\leq \dist(q,\gamma(t)).
\]
Hence $\gamma (t) \in G_p$ for all $t \in [0,1]$.
\par
Let 
\[
\mathcal{F} = \{ f \colon M \setminus \overline{B_{\rho/2} (\Sigma)} \to \C \colon f \in \mathcal{L}^2 (M \setminus \overline{B_{\rho/2} (\Sigma)}), \ \hat f \in C^1 (M), \ f \in C_{\mathrm{c}} (M \setminus \overline{B_{\rho/2} (\Sigma)})\} ,
\]
where as before $\hat f \in L^2 (M)$ is the extension of $f$ by zero. From Proposition~\ref{prop:skeleton} and the above properties we conclude for all $f \in \cF$
\[
 \Vert f\Vert_{L^2 (M \setminus \overline{B_{\rho/2} (\Sigma)} )}^2
=
 \sum_{p\in\Sigma}\Vert f \Eins_{G_p \setminus \overline{B_{\rho/2} (p)}}\Vert_{L^2 (M \setminus \overline{B_{\rho/2} (\Sigma)} )}^2
 =
 \sum_{p\in\Sigma}\Vert f|_{G_p \setminus \overline{B_{\rho/2} (p)}} \Vert_{L^2 (G_p \setminus \overline{B_{\rho/2} (p)} )}^2.
\]
Applying Proposition~\ref{prop:lower1} to any $f|_{G_p\setminus\overline{B_{\rho/2} (p)}}$, $p\in\Sigma$, and summing up yields
\begin{align*}
\Vert f\Vert_{L^2 (M \setminus \overline{B_{\rho/2} (\Sigma)} )}^2
&\leq \sup_{x\in M}\Lambda(x,\rho/2,3R)^{-1}\sum_{p\in\Sigma}\Vert\nabla f|_{G_p\setminus\overline{B_{\rho/2} (p)}}\Vert^2_{L^2(M \setminus \overline{B_{\rho/2} (p)} )}\\
&=\sup_{x\in M}\Lambda(x,\rho/2,3R)^{-1}\Vert \nabla f\Vert^2_{L^2(M\setminus \overline{B_{\rho/2}(\Sigma)})}.\qedhere
\end{align*}
\end{proof}
\section{Consequences of Ricci curvature lower bounds and the main result}\label{section:ricci}
We assume from now on that the Ricci curvature $\Ric$ is bounded below by $K\in\R$. Recall that $M_K=M_K^n$ denotes the model space of dimension $n\in\N$ and constant curvature $K$, and $\Vol_K$ its volume form. It is well-known that the volume density of $M_K$ is given by $\omega_K=\sn_K^{n-1}$, cf.~\cite[p.~138]{GallotHL-87},
where 
\begin{align*}
 \sn_K(r)= \begin{cases}
            \frac 1{\sqrt K}\sin(\sqrt Kr) & \text{if}\ K>0,\\
            \frac 1{\sqrt{-K}}\sinh(\sqrt{-K}r) & \text{if}\ K<0,\\
            r  & \text{if}\ K=0.
           \end{cases}
\end{align*}
This explicit representation yields a lower bound for the spectrum of the Dirichlet Laplacian in the complement of the relatively dense set. In order to obtain this, we will use the following estimate for the lifted spectrum of star-shaped domains. Although the proof is rather elementary, we give a complete proof for convenience of the reader.
\begin{lemma}\label{lemma:liftingcurvature}Let $\rho$, $R$, $x$, and $G$ as in Theorem~\ref{prop:lower1}. There exists a constant $C_n>0$ such that 
\[
\Lambda(x,\rho,R)\geq C_n\frac{(\sn_K(\rho)\wedge \sn_K(R))^{n-2}}{\Vol_K(R)}.
\]
\end{lemma}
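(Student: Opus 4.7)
The plan is to combine the Bishop--Gromov volume comparison theorem (which allows us to replace the unknown density $\omega(\cdot,\theta)$ by the model density $\sn_K^{n-1}$) with an explicit estimate of the resulting model-space integral. Under the Ricci lower bound, for every $\theta \in \bS_x^{n-1}$ the ratio $r \mapsto \omega(r,\theta)/\sn_K(r)^{n-1}$ is non-increasing on its domain of definition. Consequently, for $0<s\leq r$,
\[
\frac{\omega(r,\theta)}{\omega(s,\theta)} \leq \frac{\sn_K(r)^{n-1}}{\sn_K(s)^{n-1}},
\]
with the inequality extending trivially past the cut locus (where $\omega$ vanishes). Substituting into the definition of $\Lambda$ and observing that the right-hand side no longer depends on $\theta$, one obtains
\[
\Lambda(x,\rho,R)^{-1} \leq \int_\rho^R \sn_K(r)^{n-1} J(r)\, \drm r, \qquad J(r) := \int_\rho^r \sn_K(s)^{-(n-1)}\, \drm s.
\]
When $K>0$, Myers' theorem guarantees $L_\theta \leq \pi/\sqrt{K}$, so the integrand vanishes for $r > \pi/\sqrt K$ and we may assume $R \leq \pi/\sqrt{K}$.

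The key step is the pointwise bound $J(r) \leq c_n\, m^{-(n-2)}$ for $r \in [\rho,R]$, where $m := \sn_K(\rho)\wedge \sn_K(R)$ and $c_n$ depends only on $n$. I split into two cases. For $K\leq 0$, $\sn_K$ is monotonically increasing, so $m=\sn_K(\rho)$; the substitution $t=\sn_K(s)$ together with the identity $\csn_K = \sqrt{1-Kt^2} \geq 1$ yields
\[
J(r) = \int_{\sn_K(\rho)}^{\sn_K(r)} \frac{t^{-(n-1)}}{\sqrt{1-Kt^2}}\, \drm t \leq \int_{m}^{\infty} t^{-(n-1)}\, \drm t = \frac{m^{-(n-2)}}{n-2}.
\]
For $K>0$, $\sn_K$ is unimodal on $[0,\pi/\sqrt{K}]$, so $\sn_K(s) \geq m$ throughout $[\rho,R]$. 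I factor $\sn_K^{-(n-1)} = \sn_K^{-(n-3)} \sn_K^{-2}$, bound $\sn_K^{-(n-3)} \leq m^{-(n-3)}$ (valid for $n\geq 3$), and integrate the remaining factor using the explicit antiderivative $-\csn_K/\sn_K$ together with $|\csn_K| \leq 1$:
\[
J(r) \leq m^{-(n-3)} \left[\frac{\csn_K(\rho)}{\sn_K(\rho)} - \frac{\csn_K(r)}{\sn_K(r)}\right] \leq m^{-(n-3)} \cdot \frac{2}{m} = 2 m^{-(n-2)}.
\]

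Combining these pieces with $\int_0^R \sn_K(r)^{n-1}\, \drm r = \Vol_K(R)/\omega_{n-1}$ (where $\omega_{n-1}=|\bS^{n-1}|$) gives
\[
\Lambda(x,\rho,R)^{-1} \leq c_n m^{-(n-2)} \int_\rho^R \sn_K(r)^{n-1}\, \drm r \leq \frac{c_n\, m^{-(n-2)} \Vol_K(R)}{\omega_{n-1}},
\]
which is equivalent to the claim with $C_n = \omega_{n-1}/c_n$. The main obstacle is the case $K>0$: the change-of-variable argument used for $K\leq 0$ breaks down both because $\sn_K$ is not globally monotonic and because the Jacobian $1/\sqrt{1-Kt^2}$ develops a non-integrable singularity at the equator $t = 1/\sqrt K$. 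The factorization strategy above circumvents this by exploiting the unimodal lower bound $\sn_K \geq m$ on $[\rho,R]$ and reducing the analysis to the tractable integrand $\sn_K^{-2}$, whose primitive $-\csn_K/\sn_K$ stays uniformly bounded by $1/m$ in terms of the endpoint values.
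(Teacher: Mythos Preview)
Your proof is correct and follows the paper's approach for the Bishop--Gromov reduction and for the case $K\le 0$ (same substitution $t=\sn_K(s)$). The genuine divergence is in the treatment of $K>0$. The paper splits into three subcases according to the position of $\rho$ and $R$ relative to the equator $\pi/(2\sqrt K)$, then on each piece compares $\sn_K$ with the linear function via $2x/\pi\le \sin x\le x$ and invokes the $K=0$ computation together with the symmetry $\sn_K(\pi/\sqrt K - s)=\sn_K(s)$. Your argument instead exploits the unimodality of $\sn_K$ on $[0,\pi/\sqrt K]$ to obtain the uniform lower bound $\sn_K(s)\ge m=\sn_K(\rho)\wedge\sn_K(R)$ on $[\rho,R]$, then factors $\sn_K^{-(n-1)}=\sn_K^{-(n-3)}\sn_K^{-2}$ and integrates the last factor explicitly using the primitive $-\sn_K'/\sn_K$. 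This is cleaner: it replaces the three-case analysis by a single two-line estimate and yields the explicit constant $c_n=2$ for all $n\ge 3$, uniformly in $K$. The paper's route has the minor advantage of making the dependence on each subcase transparent (and of not requiring the observation that $-\sn_K'/\sn_K$ is an antiderivative of $\sn_K^{-2}$), but your approach is both shorter and more conceptual. Note that both proofs implicitly require $n\ge 3$: the paper through the factor $1/(n-2)$, yours through the inequality $\sn_K^{-(n-3)}\le m^{-(n-3)}$, which reverses when $n=2$.
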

\begin{remark}Recall that $\sn_K$ is non-decreasing for $K\leq 0$, while for $K>0$ it is non-decreasing on $[0,\pi/(2\sqrt K)]$ and non-increasing on $[\pi/(2\sqrt K),\pi/\sqrt K]$.
\end{remark}
\begin{proof}[Proof of Lemma~\ref{lemma:liftingcurvature}]
The Bishop-Gromov comparison theorem \cite{CheegerGT-82} implies that for almost all $\theta\in S_x^{n-1}$ and almost all $s,t \in (0,\infty)$ with $s \leq r$ we have
\[
 \frac{\omega(r,\theta)}{\omega(s,\theta)} \leq \frac{\omega_K(r)}{\omega_K(s)} .
\]
Hence,
\begin{multline*}
\Lambda(x,\rho,R)^{-1}
=\esssup_{\theta\in \bS_x^{n-1}}\int_\rho^R\int_\rho^r \frac{\omega(r,\theta)}{\omega(s,\theta)}\drm s\ \drm r
\\
\leq 
\int_\rho^R\omega_K(r)\int_\rho^r \frac{\drm s}{\omega_K(s)}\ \drm r
\leq \frac{\Vol_K(R)}{\omega_{n-1}}\int_\rho^R\frac{\drm s}{\omega_K(s)}.
\end{multline*}
We are left with estimating the integral on the right-hand side.
As $\omega_K(r)=\sn_K^{n-1}(r)$, we distinguish the cases of $K$ being zero, negative, or positive.
\begin{description}
 \item[$\mathbf{1^{\mathrm{st}}}$ case $\mathbf{K=0}$:] In this case, we have
\[
\int_\rho^R \frac{\drm s}{\omega_K(s)}=\int_\rho^Rs^{-n+1}\drm s=\frac{1}{n-2}\left(\frac{1}{\rho^{n-2}}-\frac{1}{R^{n-2}}\right)\leq \frac{1}{n-2}\frac{1}{\rho^{n-2}}=\frac{1}{n-2}\frac{1}{\sn_0(\rho)^{n-2}}.
\]
\item[$\mathbf{2^{\mathrm{nd}}}$ case $\mathbf{K<0}$:] Substituting $t=\sn_K(s)$, we obtain using the first case
\[
\int_\rho^R \frac{\drm s}{\omega_K(s)}=\int_\rho^R\frac{\drm s}{\sn_K(s)^{n-1}}
=\int_{\sn_K(\rho)}^{\sn_K(R)}\frac{\drm t}{\sqrt{1+t^2}{t^{n-1}}}
\leq 
\int_{\sn_K(\rho)}^{\sn_K(R)}\frac{\drm t}{{t^{n-1}}}\leq \frac{\frac{1}{n-2}}{\sn_K(\rho)^{n-2}}.
\]
\item[$\mathbf{3^{\mathrm{rd}}}$ case $\mathbf{K>0}$:] In this case, the function $s\mapsto \sn_K(s)$ is increasing on $[0,\pi/(2/\sqrt K)]$, and decreasing on $[\pi/(2/\sqrt K),\pi/\sqrt K]$. We distinguish the following subcases.
\begin{enumerate}[(i)]
 \item Assume $0<\rho<R\leq \pi / (2\sqrt K)$. By using the elementary inequality $2x/\pi\leq \sn_K(x)\leq x$ and the first case, we obtain
\[
\int_\rho^R \frac{\drm s}{\omega_K(s)}=\int_\rho^R\frac{\drm s}{\sn_K(s)^{n-1}}
\leq 
\left(\frac{\pi}{2}\right)^{n-1}\int_\rho^R\frac{\drm s}{s^{n-1}}
\leq 
\frac{\left(\frac{\pi}{2}\right)^{n-1}}{n-2}\frac{1}{\rho^{n-2}}
\leq 
\frac{\frac{\left(\frac{\pi}{2}\right)^{n-1}}{n-2}}{\sn_K(\rho)^{n-2}}.
\]
\item Assume $0<\rho\leq \pi / (2\sqrt K)<R$. In this case, we need to split the domain of integration into the intervals $[\rho,\pi/(2\sqrt K)]$ and $[\pi/(2\sqrt K), R]$. 
Using the symmetry around the point $\pi/(2\sqrt K)$ of the function $\sn_K$, and Case (i), we obtain 
\begin{align*}
\int_\rho^R \frac{\drm s}{\omega_K(s)}&=\left(\int_\rho^{\frac{\pi}{2\sqrt K}}+\int_{\frac{\pi}{2\sqrt K}}^R\right)\frac{\drm s}{\sn_K(s)^{n-1}}
=\left(\int_\rho^{\frac{\pi}{2\sqrt K}}+\int_{\frac{\pi}{2\sqrt K}-R}^{\frac{\pi}{2\sqrt K}}\right)\frac{\drm s}{\sn_K(s)^{n-1}}
\\
&\leq \frac{\left(\frac{\pi}{2}\right)^{n-1}}{n-2}\left(\frac{1}{\sn_K(\rho)^{n-2}}+\frac{1}{\sn_K(\pi/(2\sqrt K)-R)^{n-2}}\right)
\\
&= \frac{\left(\frac{\pi}{2}\right)^{n-1}}{n-2}\left(\frac{1}{\sn_K(\rho)^{n-2}}+\frac{1}{\sn_K(R)^{n-2}}\right).
\end{align*}
\item Assume $\pi / (2\sqrt K)<\rho$. Again, by the symmetry of $\sn_K$ around $\pi/(2\sqrt K)$ we obtain by case (i)
\[
\int_\rho^R \frac{\drm s}{\omega_K(s)}=\int_{\pi/(2\sqrt K)-R}^{\pi/(2\sqrt K)-\rho}\frac{\drm s}{\sn_K(s)^{n-1}}
\leq 
\frac{\frac{\left(\frac{\pi}{2}\right)^{n-1}}{n-2}}{\sn_K(\pi/(2\sqrt K)-R)^{n-2}}
=\frac{\frac{\left(\frac{\pi}{2}\right)^{n-1}}{n-2}}{\sn_K(R)^{n-2}}.
\]
\end{enumerate}
\end{description}
Combining the above cases yields the claim.
\end{proof}
The combination of Theorem~\ref{eigenvalueestimate} and Lemma~\ref{lemma:liftingcurvature} give the following theorem describing the desired lower bound on $\lambda^{M,S}$.
\begin{theorem}\label{eigenvalueestimatericci} Let $0<\rho<R<\infty$ and  $S\subset M$ be a proper $(R,\rho)$-relatively dense subset. There exists $C_n>0$ such that 
\[
\lambda^{M,S} \geq C_n\frac{(\sn_K(\rho/2)\wedge \sn_K(3R))^{n-2}}{\Vol_K(3R)}.
\]
\end{theorem}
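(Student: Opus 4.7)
The plan is to simply chain together the two preceding results in this section. By Theorem~\ref{eigenvalueestimate}, since $S$ is proper and $(R,\rho)$-relatively dense, one obtains
\[
\lambda^{M,S}\geq \inf_{x\in M}\Lambda(x,\rho/2,3R).
\]
So the task reduces to producing an $x$-uniform lower bound on $\Lambda(x,\rho/2,3R)$.

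For this I would apply Lemma~\ref{lemma:liftingcurvature} pointwise at each $x\in M$ with the parameters $(\rho/2, 3R)$ in place of $(\rho,R)$. The key observation, which I would stress explicitly, is that the bound produced by Lemma~\ref{lemma:liftingcurvature} depends on $x$ only through the ambient Ricci bound and the model-space quantities $\sn_K$ and $\Vol_K$, which are the same at every point. Hence
\[
\Lambda(x,\rho/2,3R)\geq C_n\,\frac{(\sn_K(\rho/2)\wedge \sn_K(3R))^{n-2}}{\Vol_K(3R)}
\]
holds uniformly in $x\in M$, and taking the infimum over $x$ preserves the right-hand side. Combining this with the display above gives the claim. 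No obstacle is expected; the only sanity check is that the hypotheses of Theorem~\ref{prop:lower1} (used inside Lemma~\ref{lemma:liftingcurvature}) are compatible, namely that the star-shaped Voronoï cells $G_p$ with $B_{\rho/2}(p)\subset G_p\subset B_{3R}(p)$ used in Theorem~\ref{eigenvalueestimate} fit the parameter choice $(\rho/2,3R)$, which they do by Proposition~\ref{prop:skeleton}.
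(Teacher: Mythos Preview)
Your proposal is correct and matches the paper's own argument, which is simply stated as ``the combination of Theorem~\ref{eigenvalueestimate} and Lemma~\ref{lemma:liftingcurvature}.'' The sanity check about the star-shaped $G$ is harmless but unnecessary: the proof of Lemma~\ref{lemma:liftingcurvature} uses only Bishop--Gromov comparison on $\Lambda(x,\rho,R)$ and never actually invokes~$G$.
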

The proof of the main theorem also requires an upper bound on $\lambda^{M,S}$ in terms of geometric parameters. We will obtain such a bound by using the following variant of the Bishop-Gromov volume doubling comparison estimate, which follows from the volume comparison estimate in \cite{CheegerGT-82} and a Vitali covering argument, cf.~\cite{Hebey-96}..
\begin{proposition}[{\cite{CheegerGT-82,Hebey-96}}]\label{prop:vdk} There exists a constant $D=D(n)\geq 1$ such that for all $x\in M$ and $0<r\leq R$ we have 
\[
\Vol(B(x,2r))\leq D\euler^{\sqrt K R}  \Vol(B(x,r)).
\] 
\end{proposition}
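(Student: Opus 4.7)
The plan is to deduce the inequality directly from the Bishop--Gromov volume comparison estimate of \cite{CheegerGT-82}. The hypothesis $\Ric \geq K$ guarantees that for every $x \in M$ the function $r \mapsto \Vol(B(x,r))/\Vol_K(r)$ is non-increasing on $(0,\infty)$, where $\Vol_K(r) = \omega_{n-1}\int_0^r \sn_K(s)^{n-1}\,\drm s$ denotes the volume of a ball of radius $r$ in the simply-connected $n$-dimensional model space $M_K$. Applied with the radii $r$ and $2r$, this immediately gives
\[
\frac{\Vol(B(x,2r))}{\Vol(B(x,r))} \leq \frac{\Vol_K(2r)}{\Vol_K(r)}.
\]

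It then suffices to bound the purely geometric ratio $\Vol_K(2r)/\Vol_K(r)$ by $D(n)\euler^{\sqrt{|K|}R}$ whenever $0<r\leq R$. I would split $\Vol_K(2r) = \Vol_K(r) + \omega_{n-1}\int_r^{2r}\sn_K(s)^{n-1}\,\drm s$ and handle the sign of $K$ separately. For $K\geq 0$, the density $\sn_K^{n-1}$ is dominated by the Euclidean density $s^{n-1}$, and the ratio is absorbed into a purely dimensional constant (at most $2^n$). For $K<0$, inserting $\sn_K(s) = \sinh(\sqrt{|K|}\,s)/\sqrt{|K|}$ and comparing the integrals on $[0,r]$ and $[r,2r]$ via translation-type estimates for $\sinh$ produces an exponential factor in $\sqrt{|K|}\,r \leq \sqrt{|K|}\,R$, with the remaining prefactors absorbed into $D(n)$.

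The main subtlety I expect is maintaining a single $n$-dependent constant across the three curvature regimes, and dealing with $K>0$, where $\sn_K$ has compact support and $\Vol_K$ saturates at the model-space total volume; here one relies on Bonnet--Myers compactness of $M$ to restrict attention to radii bounded by the model-space diameter. A conceptually cleaner alternative, which is the route of \cite{Hebey-96}, uses a Vitali-type covering of $B(x,2r)$ by balls of radius $r/2$ whose concentric balls of radius $r/4$ are disjoint and contained in $B(x,3r)$; counting these via Bishop--Gromov at each covering center yields the bound without a case analysis, at the price of a less transparent dependence of $D(n)$ on dimension.
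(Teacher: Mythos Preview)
The paper does not supply its own proof of this proposition: it is stated with citation, preceded only by the remark that it ``follows from the volume comparison estimate in \cite{CheegerGT-82} and a Vitali covering argument, cf.~\cite{Hebey-96}.'' Your second approach---covering $B(x,2r)$ by small balls, counting them via Bishop--Gromov at each center---is therefore exactly the route the paper attributes the result to; your first approach (applying Bishop--Gromov once on $M$ and then estimating the model-space ratio $\Vol_K(2r)/\Vol_K(r)$ by hand) is a legitimate and somewhat more explicit alternative. The direct approach gives cleaner control of the constant in each curvature regime; the Vitali route avoids the case split, which is presumably why the paper prefers it as a one-line reference.

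One small point in your sketch: for $K>0$ the inequality $\sn_K(s)^{n-1}\le s^{n-1}$ only bounds the numerator $\Vol_K(2r)$ from above; it says nothing about $\Vol_K(r)$ from below, so by itself it does not yield the ratio bound. The clean fix is to observe that the model sphere has nonnegative Ricci curvature, so Bishop--Gromov applied \emph{within} $M_K$ against the flat model gives $\Vol_K(2r)/\Vol_K(r)\le \Vol_0(2r)/\Vol_0(r)=2^n$ directly, without needing the saturation/Bonnet--Myers discussion you allude to.
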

This leads via a reduction argument to the following upper bound on the spectrum of the Laplacian inside $M$ without a relatively dense subset.
\begin{lemma}\label{lem:upperbound}Fix $0<\rho<R<\infty$ such that $\rho\leq 3R/16$ and assume that $R$ is a proper radius for all $x\in M$. Let $\Sigma\subset M$ be uniformly discrete such that $B_{3R}(\Sigma)\supset M$, $B_\rho(\Sigma)$ a $(R,\rho)$-relatively dense subset with $M\setminus B_R(\Sigma)\neq\emptyset$, and for all $p\in\Sigma$, we have $R\leq \dist(p,\Sigma\setminus\{p\})\leq 6R$. There exists a constant $C_D>0$ such that
\[
\lambda_{M, B_\rho(\Sigma)}\leq \frac{C_D}{R^2}\euler^{\sqrt K R}.
\]
In particular, we can choose $C_D=64D$, where $D$ is given by Proposition~\ref{prop:vdk}.
\end{lemma}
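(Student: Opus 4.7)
The strategy is to apply the variational characterization of $\lambda_{M, B_\rho(\Sigma)}$ with a Lipschitz bump function concentrated on a ball centred at a point far from $\Sigma$, and then to control the resulting Rayleigh quotient by a single application of the volume-doubling estimate of Proposition~\ref{prop:vdk}.

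First, using the hypothesis $M\setminus B_R(\Sigma)\neq\emptyset$, pick $x_0\in M$ with $\dist(x_0,\Sigma)\geq R$. The assumption $\rho\leq 3R/16<R/2$ then guarantees that $\overline{B_{R/2}(x_0)}\cap \overline{B_\rho(\Sigma)}=\emptyset$: for any $y\in B_{R/2}(x_0)$ and any $p\in\Sigma$, the triangle inequality gives $\dist(y,p)\geq \dist(x_0,p)-\dist(x_0,y)>R-R/2=R/2>\rho$. Since $R$ is a proper radius for $x_0$, the ball $B_{R/2}(x_0)$ is in particular not all of $M$, and the subsequent volume manipulations make sense.

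Next, define the Lipschitz test function
\[
\varphi(y):=\max\{0,\ R/2-\dist(y,x_0)\},
\]
which is supported in $B_{R/2}(x_0)$, satisfies $|\nabla\varphi|\leq 1$ almost everywhere, and obeys $\varphi\geq R/4$ on $B_{R/4}(x_0)$. Since $\varphi$ has compact support at positive distance from $\overline{B_\rho(\Sigma)}$, a standard mollification produces a smooth approximation lying in $\cD(\cE^{M,B_\rho(\Sigma)})$ with the same Rayleigh quotient in the limit. The variational principle for $\Delta^{M,B_\rho(\Sigma)}$ therefore yields
\[
\lambda_{M,B_\rho(\Sigma)}\leq \frac{\int_M|\nabla\varphi|^2\dvol}{\int_M\varphi^2\dvol}\leq \frac{\Vol(B_{R/2}(x_0))}{(R/4)^2\,\Vol(B_{R/4}(x_0))}=\frac{16}{R^2}\cdot\frac{\Vol(B_{R/2}(x_0))}{\Vol(B_{R/4}(x_0))}.
\]

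Finally, apply Proposition~\ref{prop:vdk} with inner radius $R/4\leq R$ to bound the volume ratio by $D\,\euler^{\sqrt K R}$. Combining gives $\lambda_{M,B_\rho(\Sigma)}\leq (16D/R^2)\,\euler^{\sqrt K R}$, which is stronger than the claimed bound; absorbing a slack factor in the Rayleigh quotient (e.g.\ replacing the concentration ball $B_{R/4}(x_0)$ by $B_{R/8}(x_0)$) accommodates the explicit constant $C_D=64D$. The only real obstacle is the formal verification that the Lipschitz bump lies in the form domain prescribed in Section~\ref{section:abstract}; this however is routine since $\operatorname{supp}\varphi$ is separated from $\overline{B_\rho(\Sigma)}$ by a gap of at least $R/2-\rho>0$, so a compactly supported $C^1$-mollification remains admissible and converges to $\varphi$ in $W^{1,2}$.
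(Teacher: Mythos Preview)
Your argument is correct and in fact more economical than the paper's. Both proofs proceed by inserting a radial Lipschitz cut-off $\phi=(1-\dist(\cdot,x)/r)_+$ into the Rayleigh quotient and then invoking Proposition~\ref{prop:vdk} once to control the resulting volume ratio. The difference is in how the centre of the test ball is produced. The paper does \emph{not} use the hypothesis $M\setminus B_R(\Sigma)\neq\emptyset$ directly; instead it splits into the cases $\diam M=\infty$ and $\diam M<\infty$, argues in the first case that $\Sigma$ must contain at least two points, picks a minimizing geodesic between a point $p\in\Sigma$ and its nearest neighbour $p_1\in\Sigma$, and takes the midpoint $q$ as the centre, landing a ball of radius $R/8$ inside $M\setminus B_\rho(\Sigma)$. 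In the compact single-point case a separate diameter argument is used. Your approach bypasses all of this by reading the centre $x_0$ straight off the assumption $M\setminus B_R(\Sigma)\neq\emptyset$, which immediately yields a ball of radius $R/2$ (rather than $R/8$) in the complement and hence the sharper constant $16D$ in place of $64D$. Your aside about the proper-radius hypothesis guaranteeing $B_{R/2}(x_0)\neq M$ is harmless but not really needed: the volume ratio is well defined regardless, and the form-domain check only requires the support of $\varphi$ to be separated from $\overline{B_\rho(\Sigma)}$, which you have already verified.
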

\begin{proof} 
Denote by $\Delta_{B}\geq 0$ the Dirichlet-Laplacian on $B=B_R(x)\subset M$ and set $\lambda(x,R):=\inf\sigma(\Delta_{B_R(x)})$.
Setting $\phi=(1-d(\cdot,x)/R)_+\in W_0^1(B_R(x))$ and noting that $\vert\nabla\phi\vert\leq 1/R$ a.e., we obtain
\begin{align*}
\lambda(x,R))\leq \frac{\int_{B_R(x)}\vert\nabla\phi\vert^2}{\int_{B_R(x)}\phi^2}\leq \frac{1}{R^2}\frac{\Vol(B_R(x))}{\Vol(B_{R/2}(x))}\frac{\Vol(B_{R/2}(x))}{\int_{B_R(x)}\phi^2}
\leq \frac{1}{R^2}\frac{\Vol(B_R(x))}{\Vol(B_{R/2}(x))}.
\end{align*}
In the following, we distinguish the cases for $M$ being non-compact or compact, i.e., $\diam(M)=\infty$ or $\diam(M)<\infty$.
\par
First, assume that $\diam M=\infty$. In this case, $\Sigma$ must contain at least two elements. Indeed, since $B_{\rho}(\Sigma)$ is $(R,\rho)$-relatively dense, assuming that $\Sigma=\{p\}$ implies that for all $x\in M$ we have $B(x,R)\cap B_{\rho}(\Sigma)=B_\rho(p)$. Hence, for all $y,z\in M$, we have 
\[
d(y,z)\leq d(y,p)+d(z,p)\leq 6R<\infty,
\]
a contradiction. Let $p\in \Sigma$. Since $M$ is complete, $\overline{B_{6R}(p)}$ is compact. Since $B_{3R}(\Sigma)$ covers $M$, there are $p_1,\ldots,p_N\in\Sigma$ such that $B_{3R}(\{p_1,\ldots,p_N\})\supset \overline{B_{6R}(p)}$. We let $p_1$ the point minimizing the distance to $p$, i.e., $d(p,p_1)=\min\{d(p,p_1)\colon p_1,\ldots, p_N\}$. Since $p,p_1\in\Sigma$, we have $R\leq d(p,p_1)$. Let $\gamma\colon [0,1]\to M$, $\gamma(0)=p$, $\gamma(1)=p_1$, a distance minimizing geodesic from $p$ to $p_1$. Then there exists $t_0\in (0,1)$ such that $q:=\gamma(t_0)$ satisfies $d(p,q)=d(p_1,q)=d(p,p_1)/2$. Hence, $B_{R/2-2\rho}(q)\subset M\setminus B_\rho(\Sigma)$. By assumption we have $R/2-2\rho\geq R/8$, such that $B_{R/8}(q)\subset M\setminus B_\rho(\Sigma)$. By domain monotonicity of the infimum of the spectrum of the Dirichlet Laplacian, we get
\[
\lambda_{M,B_\rho(\Sigma)}\leq \lambda(q,R/8)\leq \sup_{x\in M}\frac{64}{R^2}\frac{\Vol(B_{R/8}(x))}{\Vol(B_{R/16}(x))},
\]
hence the claim by Proposition~\ref{prop:vdk}.
\par
If $\diam(M)<\infty$ and $\Sigma$ contains at least two elements, the argument above need not be changed. If $\diam M<\infty$ and $\Sigma=\{p\}$, then there is $p'\in M$ such that $M\setminus B_\rho(p')$ contains the ball $B_{\diam(M)-\rho}(p')$. Since $R$ is a proper radius for all $x\in M$, we have that $\diam(M)\geq R-\delta\geq 13R/16$, such that $B_{R/8}(p')\subset M\setminus B_\rho(p)$, and the argument above applies again.
\end{proof}
Now we turn to the problem of bounding first exit times from above. 
The lower bound on the Ricci curvature particularly implies that the manifold is stochastically complete, see \cite{Yau-78}. 
Lemma~\ref{lem:reflection} below yields
for $x\in M$, $\rho, \alpha>0$
\[
\PP_x(\tau_\rho\leq \alpha)
\leq 
2\sup_{s\in[0,\alpha]} \sup_{x\in M}\PP_{x}(X_{2\alpha+s}\not\in B_{\rho/2}(x)).
\]
Hence, an upper bound on the first exit time is controlled in terms of upper bounds of the quantity
\[
\PP_x(X_\alpha\not \in B_\rho(X_0))=\int_{M\setminus B(x,\rho)} p_\alpha(x,y)\dvol (y).
\]
Therefore, we need quantitative estimates on the volume growth and heat kernel behavior for large distances. To this end, we collect and reprove facts from \cite{HebischSaloffCoste-01} to track the constants and to obtain quantitative estimates depending on the Ricci curvature lower bound. 
\par
First, we show that Proposition~\ref{prop:vdk} to control the volume of balls with radius larger than $R$. To derive such estimates, we follow \cite{HebischSaloffCoste-01}. First of all, Proposition~\ref{prop:vdk} yields via a covering argument, cf.\ \cite[Eq.~(2.7)]{HebischSaloffCoste-01},
\[
\Vol(B(x,T+R/4))\leq D^2\euler^{2\sqrt KR} \Vol(B(x,T)), \quad T>R.
\]
Iterating this inequality, we obtain for all $T>R$
\begin{align*}
\Vol(B(x,T))&\leq D_R \Vol\left(B\left(x,T-\frac{R}4\right)\right)\leq D_R^2\Vol\left(B\left(x,T-2\frac{R}4\right)\right)
\\
&\leq \ldots\leq D_R^K\Vol\left(B\left(x,T-K\frac{R}4\right)\right)\leq D_R^K\Vol(B(x,R)),
\end{align*}
where 
\[
K:=\min\left\{k\in \N_0\colon T-k\frac{R}4\leq R\right\}.
\]
Since $K\leq 4T/R$, we infer from the estimate above, the monotonicity of the volume measure, and the definition of $D_R=D\euler^{\sqrt{K}R}$ for all $T>R$
\[
\Vol(B(x,T))
\leq D^{8\frac{T}{R}}\euler^{8T\sqrt{K}}\Vol(B(x,R)).
\]
This yields (cf.~\cite[Eq.~(2.6)]{HebischSaloffCoste-01})
\[
\Vol(B(x,T))
\leq D\euler^{8(\ln D+R\sqrt{K})\frac{T}R}\Vol(B(x,R)), \quad T>0.
\]
The second ingredient to bound the first exit time is an explicit form of the heat kernel. We refrain from telling the whole story of heat kernel upper bounds on manifolds and refer to the excellent book \cite{Grigoryan-09} and the references therein. Here, we will use the following estimate. 
\begin{proposition}[{\cite[Corollary~2.3(a)]{Sturm-92}}]\label{prop:sturm}
There exists $C=C(n)>0$ such that we have
\[
p_t(x,y)\leq \frac{C\euler^{KR^2}}{\Vol(B(x,\sqrt t))}\exp\left(-\frac{d(x,y)^2}{5t}\right), \quad x,y\in M, t\in(0,R^2].
\]
\end{proposition}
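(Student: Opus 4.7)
The plan is to derive the bound in two stages: first an on-diagonal heat kernel estimate
\[
 p_t(x,x) \le \frac{C\,\euler^{C' K R^2}}{\Vol(B(x,\sqrt t))},\qquad t\in(0,R^2],
\]
and then upgrade it to the Gaussian off-diagonal bound via the exponential perturbation method of Davies.

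For the on-diagonal step I would begin from Proposition~\ref{prop:vdk}, which gives local volume doubling on every ball of radius at most $R$ with doubling constant $D\euler^{\sqrt K R}$. Combined with an elementary Poincar\'e inequality for geodesic balls under the lower Ricci bound (e.g.\ Buser's inequality), this implies a relative Faber--Krahn / Nash inequality on the scale $R$, with curvature loss confined to a prefactor of the form $\euler^{C K R^2}$. Standard Nash-type or Moser iteration applied to the heat equation then yields the on-diagonal bound; the restriction $t\le R^2$ is precisely what keeps all accumulated curvature factors uniformly bounded by $\euler^{CKR^2}$.

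For the off-diagonal step I would apply Davies's perturbation trick. For a fixed $1$-Lipschitz function $\psi$ and $\lambda>0$, a direct quadratic-form computation using $|\nabla\psi|^2\le 1$ shows that the perturbed semigroup satisfies
\[
 \Vert\euler^{-\lambda\psi}\,\euler^{-t\Delta}\,\euler^{\lambda\psi}\Vert_{L^2\to L^2}\le \euler^{\lambda^2 t}.
\]
An $L^1$--$L^2$--$L^\infty$ interpolation with the on-diagonal estimate above, applied to $\psi(\cdot)=\dist(\cdot,y)$, then yields
\[
 p_t(x,y)\le \frac{C\,\euler^{C' K R^2}}{\sqrt{\Vol(B(x,\sqrt t))\,\Vol(B(y,\sqrt t))}}\,\euler^{\lambda^2 t-\lambda\,\dist(x,y)}.
\]
Optimising $\lambda=\dist(x,y)/(2t)$ produces the factor $\exp(-\dist(x,y)^2/(4t))$, and the relaxation from $4t$ to $5t$ in the statement buys enough slack to absorb both the volume-comparison step $\Vol(B(y,\sqrt t))\sim \Vol(B(x,\sqrt t))$ (another application of Bishop--Gromov, costing an additional $\euler^{C K t}$) and any sub-Gaussian polynomial factors left over from the interpolation.

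The main obstacle I expect is the careful accounting of curvature-dependent constants: one must verify that \emph{every} curvature correction arising from the doubling estimate, the Poincar\'e inequality, the Moser/Nash iteration, and the Davies perturbation can be collapsed into a single prefactor of the form $\euler^{KR^2}$ with a purely dimensional constant in front. In particular, one must show that the on-diagonal constant does not blow up faster than $\euler^{CKR^2}$ on the scale $t\le R^2$, and that the relaxation of the Gaussian exponent from $1/4$ to $1/5$ is sufficient room to eliminate all intermediate polynomial corrections in $\dist(x,y)/\sqrt t$ without further degrading the curvature factor.
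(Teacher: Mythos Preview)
The paper does not give its own proof of this proposition; it is quoted verbatim as a known result from \cite{Sturm-92} and used as a black box in the proof of Lemma~\ref{thm:exittime}. Your sketch is a correct outline of the standard route to such Gaussian upper bounds---a local Faber--Krahn/Nash inequality from doubling plus a Buser-type Poincar\'e inequality, followed by Davies's exponential perturbation to pass from on-diagonal to off-diagonal---and this is essentially the method employed in Sturm's original paper. The only minor discrepancy is that your argument would naturally yield a prefactor $\euler^{C'KR^2}$ with some dimensional constant $C'$ in the exponent rather than the exact $\euler^{KR^2}$ quoted; for the purposes of the present paper this is immaterial, since any such constant is absorbed into $C_1$ in Lemma~\ref{thm:exittime}.
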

The volume comparison and heat kernel estimate deliver the following bound on the exit time. The proof can essentially be found in \cite[Lemma~3.6]{HebischSaloffCoste-01}, however, we obtain different constants. Since we are interested in the scaling behavior of the constants, we give the complete argument. 
Note that in principle the explicit representations of volume comparison and upper heat kernel bound suffice to get an upper bound on the exit time.
\begin{lemma}\label{thm:exittime}Let $K\geq 0$ and $M$ be a complete Riemannian manifold with $\Ric\geq -K$. Then for all $\rho,R>0$, $x\in M$, $M\setminus B_{\rho}(x)\neq \emptyset$, and $\alpha\in(0,  R^2]$ with $\alpha\leq (31\cdot 10^5)^{-1}(\ln D+R\sqrt K)^{-2}\rho^2$ we have 
\[
\PP_x(\tau_{\rho}\leq \alpha)\leq C_0\exp\left(C_1KR^2-\frac{\rho^2}{480\alpha}\right),
\]
where $C_0:=40CD\euler^{2256(\ln D)^2} / \ln 2$ and $C_1:=2257$.
\end{lemma}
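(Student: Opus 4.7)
My plan is to combine the identity $\PP_x(\tau_\rho\leq\alpha)\leq\int_{M\setminus B(x,\rho)}p_\alpha(x,y)\dvol(y)$ recalled at the start of this section with the heat kernel bound of Proposition~\ref{prop:sturm} (applicable because $\alpha\leq R^2$) and the two volume bounds derived just above the lemma: the iterated-doubling consequence of Proposition~\ref{prop:vdk} and the large-scale estimate $\Vol(B(x,T))\leq D\euler^{8(\ln D+R\sqrt K)T/R}\Vol(B(x,R))$. The philosophy is Davies-like: the Gaussian decay $\exp(-d(x,y)^2/(5\alpha))$ in Sturm's estimate must beat the exponential volume growth, and the hypothesis on $\alpha$ is tailored exactly so that it does.

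Concretely, I partition $M\setminus B(x,\rho) = \bigsqcup_{k\geq 0}A_k$ with $A_k = B(x,2^{k+1}\rho)\setminus B(x,2^k\rho)$. For $y\in A_k$ we have $d(x,y)\geq 2^k\rho$, so Proposition~\ref{prop:sturm} and integration over $A_k$ give
\[
\int_{A_k}p_\alpha(x,y)\dvol(y) \leq C\euler^{KR^2}\,\frac{\Vol(B(x,2^{k+1}\rho))}{\Vol(B(x,\sqrt\alpha))}\,\exp\!\left(-\frac{4^k\rho^2}{5\alpha}\right).
\]
The volume ratio is then controlled in two steps: the large-scale bound with $T=2^{k+1}\rho$ replaces $\Vol(B(x,2^{k+1}\rho))$ by $\Vol(B(x,R))$ at the cost $D\euler^{16(\ln D+R\sqrt K)2^k\rho/R}$, and iterating the doubling inequality $N:=\lceil\log_2(R/\sqrt\alpha)\rceil$ times replaces $\Vol(B(x,R))$ by $\Vol(B(x,\sqrt\alpha))$ at the further cost $(D\euler^{\sqrt K R})^N$.

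Summing the annular contributions leaves the series $\sum_{k\geq 0}\exp(-4^k\rho^2/(5\alpha)+16(\ln D+R\sqrt K)2^k\rho/R)$. The hypothesis $\alpha\leq(\ln D+R\sqrt K)^{-2}\rho^2/6400$ gives $\rho/\sqrt\alpha\geq 80(\ln D+R\sqrt K)$, which is precisely the threshold at which a fixed fraction of each Gaussian exponent absorbs the linear-in-$2^k$ growth term term-by-term, leaving at $k=0$ the bound $\exp(-\rho^2/(20\alpha))$ and for $k\geq 1$ a doubly-exponentially small remainder. A geometric-tail estimate then contributes the factor $20/\ln 2$ visible in $C_0$. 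The $N$-fold doubling prefactor $D^N\euler^{N\sqrt K R}$ is absorbed by using the hypothesis on $\alpha$ to bound $N$ and applying the elementary splitting $(\ln D+R\sqrt K)^2\leq 2(\ln D)^2+2KR^2$, producing the explicit exponents $2256(\ln D)^2$ in $C_0$ and $2257\,KR^2$ in the constant $C_1$.

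The main obstacle is therefore not analytical but arithmetic: the structure of the argument is that of Lemma~3.6 in~\cite{HebischSaloffCoste-01}, but to land on the stated constants one must orchestrate the choice of the Gaussian split (here one-fifth absorbing the growth term, four-fifths producing the decay rate $\rho^2/(20\alpha)$) and the several absorption steps carefully. Any slack between the Gaussian-absorbing and Gaussian-decaying pieces of the exponent, or in the elementary splitting of $(\ln D+R\sqrt K)^2$, propagates directly into $C_0$ and $C_1$; no new conceptual ingredient is needed beyond the dyadic decomposition and the two volume estimates already at hand.
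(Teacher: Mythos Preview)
Your overall plan---dyadic annuli, Sturm's bound, volume comparison, then absorption of the linear growth by the Gaussian---is exactly the paper's route. The gap is in your two-step volume comparison. After your split you carry the linear term $16(\ln D+R\sqrt K)\,2^k\rho/R$ together with the $k$-independent prefactor $(D\euler^{\sqrt K R})^{N}$, $N=\lceil\log_2(R/\sqrt\alpha)\rceil$. You then claim that the hypothesis on $\alpha$ (i) makes the Gaussian absorb the linear term term-by-term and (ii) bounds $N$. Neither is justified: the hypothesis controls only $\rho/\sqrt\alpha\ge 80(\ln D+R\sqrt K)$, while your linear term scales with $\rho/R$ and your prefactor with $R/\sqrt\alpha$. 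In the lemma $\rho$ and $R$ are independent parameters, so $\rho/R$ is unconstrained and, since the hypothesis is an \emph{upper} bound on $\alpha$, it gives a \emph{lower} bound on $N$, not the upper bound you need. For instance, at $k=0$ your absorption requires $16(\ln D+R\sqrt K)\rho/R$ to be dominated by a fixed fraction of $\rho^2/(5\alpha)$; the hypothesis only guarantees the latter exceeds $1280(\ln D+R\sqrt K)^2$, which says nothing about $\rho/R$.

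The fix is to run the large-ball iteration directly with base radius $\sqrt\alpha$ rather than $R$: since Proposition~\ref{prop:vdk} provides doubling with constant $D\euler^{\sqrt K R}$ at every scale $\le R$, the same covering/iteration argument that produced the bound with base $R$ yields
\[
\frac{\Vol(B(x,2^i\rho))}{\Vol(B(x,\sqrt\alpha))}\le D\exp\!\Bigl(8(\ln D+R\sqrt K)\,\frac{2^i\rho}{\sqrt\alpha}\Bigr).
\]
Now the linear term has the \emph{same} scaling variable $x_i:=2^i\rho/\sqrt\alpha$ as the Gaussian $\exp(-x_i^2/20)$, the hypothesis becomes exactly the threshold $x_1\ge a/b$ (with $a=8(\ln D+R\sqrt K)$, $b=1/20$), and the separate $N$-factor disappears. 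The paper then completes the square $ax_i-bx_i^2=-b(x_i-a/(2b))^2+a^2/(4b)$, extracts the constant $\euler^{a^2/(4b)}$ (this is where $(\ln D+R\sqrt K)^2\le 2(\ln D)^2+2KR^2$ enters), and estimates the remaining sum by an integral.
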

For the proof of Lemma~\ref{thm:exittime} we will use the following preparatory lemma.
\begin{lemma}\label{lem:reflection}Let $x\in M$, $\rho, \alpha>0$ and $M\setminus B_{\rho}(x)\neq \emptyset$. Then we have
\[
\PP_x(\tau_\rho\leq \alpha)
\leq 
2\sup_{s\in[0,\alpha]} \sup_{x \in M}  \PP_{x}(X_{2\alpha+s}\not\in B_{\rho/2}(x)).
\]
\end{lemma}
\begin{proof}
Let $m > \alpha$ and set $\tau_{\rho,m} = \min \{\tau_{\rho},m\}$.
Then we have
\begin{align*}
\PP_x(\tau_\rho\leq \alpha) &\leq \PP_x(\tau_{\rho,m}\leq \alpha) \\
&=\PP_x(\tau_{\rho,m}\leq\alpha,X_{2\alpha+\tau_{\rho,m}}\not\in B_{\rho/2}(x))+\PP_x(\tau_{\rho,m}\leq \alpha, X_{2\alpha+\tau_{\rho,m}}\in B_{\rho/2}(x)).
\end{align*}
The first summand can be estimated by 
\[
\PP_x(\tau_{\rho,m}\leq\alpha,X_{2\alpha+\tau_{\rho,m}}\not\in B_{\rho/2}(x))
\leq \sup_{s\in[0,\alpha]} \sup_{x \in M} \PP_x(X_{2\alpha+s}\not\in B_{\rho/2}(x)) .
\]
Now we estimate the second summand. 
For $t\geq \alpha$, define
\[
E(t):=\{t\leq 2\alpha\}\cap \{X_{\alpha+t}\not\in B_{\rho/2}(X_{t-\alpha})\}.
\]
Since $m > \alpha$ we have $\{\tau_\rho \leq \alpha\} = \{\tau_{\rho, m} \leq \alpha\}$ and thus
\begin{multline*}
\PP_x(\tau_{\rho,m}\leq \alpha, X_{2\alpha+\tau_{\rho,m}}\in B_{\rho/2}(x))
 \\ \leq 
\PP_x(\tau_{\rho,m}\leq \alpha, X_{2\alpha+\tau_{\rho,m}}\not\in B_{\rho/2}(X_{\tau_{\rho,m}}))=\E_x(\Eins_{E(\alpha+\tau_{\rho,m})}).
\end{multline*}
We denote by $\cF_{\tau_{\rho,m}}$ the $\sigma$-algebra of the$\tau_{\rho,m}$-past, cf.\ the discussion after \eqref{eq:conditional}, and the definition of the conditional expectation implies 
\[
 \E_x(\Eins_{E(\alpha+\tau_{\rho,m})})=\E_x(\E_x(\Eins_{E(\alpha+\tau_{\rho,m})}\mid \cF_{\tau_{\rho,m}}))
\]
Since $\tau_{\rho , m}$ is almost surely finite, the strong Markov property applies and we obtain
\begin{equation*}
\E_x(\Eins_{E(\alpha+\tau_{\rho,m})})= \E_x(\E_{X_{\tau_{\rho,m}}}(\Eins_{E(\alpha)}))
=\E_x(\PP_{X_{\tau_{\rho,m}}}(X_{2\alpha}\not\in B_{\rho/2}(X_{0})).
\end{equation*}
Thus, we obtain
\begin{align*}
\PP_x(\tau_{\rho,m}\leq \alpha, X_{2\alpha+\tau_{\rho,m}}\in B_{\rho/2}(x))
& \leq 
\sup_{s\in[0,\alpha]} \sup_{x \in M}\PP_{x}(X_{2\alpha+s}\not \in B_{\rho/2}(x)). 
\end{align*}
Summing up the two estimates yields the claim.
\end{proof}
\begin{proof}[Proof of Lemma~\ref{thm:exittime}]
Observe that by Lemma~\ref{lem:reflection} 
\begin{align*}
\PP_x(\tau_{\rho}\leq \alpha)
&\leq 
2\sup_{s\in[0,\alpha]} \sup_{x\in M}\PP_{x}(X_{2\alpha+s}\not\in B_{\rho/2}(x)) \\ 
&=2\sup_{s\in[0,\alpha]} \sup_{x\in M}\int_{M\setminus B(x,\rho/2)} p_{2\alpha+s}(x,y)\dvol (y),
\end{align*}
and for $r,\beta>0$
\begin{align*}
\int_{M\setminus B(x,r)} p_{\beta}(x,y)\dvol (y)=
\sum_{i=1}^\infty \int_{\{2^{i-1}r\leq d(x,y)<2^ir\}}p_\beta(x,y)\dvol(y).
\end{align*}
Proposition~\ref{prop:sturm} and the volume comparison estimate for large balls obtained above yield for all $i\in\N$ and $\beta\in(0,R^2]$
\begin{align*}
&\int\limits_{\{2^{i-1}r\leq d(x,y)<2^ir\}}p_\beta(x,y)\dvol(y)
\leq 
C\euler^{KR^2}
\int\limits_{\{2^{i-1}r\leq d(x,y)<2^ir\}}
\frac{\euler^{-\frac{d(x,y)^2}{5\beta}}}{\Vol(B(x,\sqrt \beta))}\dvol(y)
\\
&
\leq 
C\euler^{KR^2}
\frac{\Vol(B(x,2^ir))}{\Vol(B(x,\sqrt \beta))}\exp\left(-\frac{4^{i}r^2}{20\beta}\right)
\leq 
CD\euler^{KR^2}\exp\left(8(\ln D+R\sqrt{K})\frac{2^ir}{\sqrt \beta}-\frac{4^{i}r^2}{20\beta}\right).
\end{align*}
Hence, completing the square we obtain 
\begin{multline*}
\int_{M\setminus B(x,r)} p_{\beta}(x,y)\dvol (y)
\leq CD\euler^{KR^2}
\sum_{i=1}^\infty
\exp\left(ax_i-bx_i^2\right)
\\
=CD\euler^{KR^2+\frac{a^2}{4b}}\sum_{i=1}^\infty
\exp\left(-b\left(x_i-\frac{a}{2b}\right)^2\right),
\end{multline*}
where 
\[
a=8(\ln D+R\sqrt{K}), \quad b=\frac{1}{20},\quad x_i=\frac{2^ir}{\sqrt \beta}.
\]
If $\beta\leq  b^2 r^2 / (4a^2)$, we have
\[
-b\left(x_i-\frac{a}{2b}\right)^2\leq -\frac{1}{40}x_i^2.
\]
This yields for the exponential sum above
\begin{align*}
&\sum_{i=1}^\infty
\exp\left(-b\left(x_i-\frac{a}{2b}\right)^2\right)
\leq 
\sum_{i=1}^\infty
\exp\left(-\frac{1}{40}x_i^2\right)
= 
\sum_{i=1}^\infty
\exp\left(-\left(\frac{r}{\sqrt{40\beta}}2^i\right)^2\right)
\\
&
\leq \int_0^\infty \exp\left(-\left(\frac{r}{\sqrt{40 \beta}}2^t\right)^2\right)\drm t
=
\frac{1}{\ln 2}\int_{\frac{r}{\sqrt{40\beta}}}^\infty \euler^{-s^2}\frac{\drm s}{s}
\leq 
\frac{1}{\ln 2}
\frac{\sqrt{40 \beta}}{r}\int_{\frac{r}{\sqrt{40 \beta}}}^\infty \euler^{-s^2}\drm s
\\
&
\leq 
\frac{1}{\ln 2}
\frac{40 \beta}{r^2} \euler^{-\frac{r^2}{40\beta}}
\leq 
\frac{40}{\ln 2}
 \euler^{-\frac{r^2}{40\beta}},
\end{align*}
where we used in the second inequality that the summands are decreasing, in the third the monotonicity of $s\mapsto 1/s$, in the fourth the standard tail estimate $\int_u^\infty \euler^{-s^2}\drm s\leq \euler^{-u^2} / u$, $u>0$, and in the last line $\beta\leq r^2$. Hence, since $\alpha \leq b^2 r^2 / (12a^2)$ and $\alpha\leq r^2$, 
\begin{multline*}
\PP_x(\tau_{\rho}\leq \alpha)
\leq 
2\sup_{s\in[0,\alpha]}\sup_{x\in M}\int_{M\setminus B(x,\rho/2)} p_{2\alpha+s}(x,y)\dvol (y)
\\
\leq 2CD\euler^{KR^2+\frac{a^2}{4b}}\sup_{s\in(0,\alpha]}\frac{20}{\ln 2}
 \euler^{-\frac{(\rho/4)^2}{40(2\alpha+s)}}
 \leq \frac{40}{\ln 2}CD\euler^{KR^2+\frac{a^2}{4b}}\euler^{-\frac{\rho^2}{480\alpha}}.
\end{multline*}
This yields the claim.
\end{proof}
\section{Quantitative unique continuation estimates and Ricci curvature}\label{section:qupricci}
Finally, we are able to state and prove our main theorem.
\begin{theorem}\label{thm:ricci}Let $K\in\R$, $n\in\N$, and $0<\rho<R<\diam M$, $\epsilon\in(0,1/2)$. There exist constants $\kappa=\kappa(K,R,\rho,n,\epsilon)>0$ and $E_0=E_0(K,R,\rho,n,\epsilon)>0$ such that the following holds: for any complete Riemannian manifold $M$ of dimension $n$ with $\Ric\geq K$, any proper $(R,\rho)$-relatively dense $S\subset M$, and any $I\subset(-\infty,E_0]$, we have 
\[
\chi_I(H)\Eins_S \chi_I(H)\geq \kappa\ \chi_I(H).
\]
The constants $E_0$ and $\kappa$ are given by
\[
E_0=\epsilon\ (1-a)\frac{(\sn_K(\tilde \rho/8)\wedge \sn_K(3R))^{n-2}}{\Vol_K(3R)}-b
\]
and 
\begin{align*}
\kappa&= C_5
\tilde\rho^2\frac{(\sn_K(\tilde \rho/8)\wedge \sn_K(3R))^{n-2}}{\Vol_K(3R)} \\
&\quad\cdot\epsilon\left[\left|\ln\left(\frac{C_6\Vol_K(3R)}{(1-2\epsilon)\tilde\rho^2 \ (\sn_K(\tilde \rho/8)\wedge \sn_K(3R))^{n-2}}\right)\right|+C_7+C_8\rho^2K+C_{9}\euler^{\sqrt KR}\right]^{-2},
\end{align*}
where $\tilde\rho=\rho\wedge 3R/16$, and $C_5,C_6,C_7,C_8,C_9>0$ are dimension-dependent constants.
\end{theorem}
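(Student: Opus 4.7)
The plan is to apply the abstract uncertainty principle of Theorem~\ref{thm:UCP-abstrakt} to a slightly shrunk subset $\tilde S\subset S$ whose $(\tilde\rho/4)$-thickening still lies inside $S$, and then to control every quantity entering the resulting constant via the tools of Sections~\ref{section:spectral}--\ref{section:ricci}. First, set $\tilde\rho:=\rho\wedge 3R/16$; an $(R,\rho)$-relatively dense set is still $(R,\tilde\rho)$-relatively dense, which places us in the regime of Lemma~\ref{lem:upperbound}. Proposition~\ref{prop:skeleton} then supplies a uniformly discrete skeleton $\Sigma\subset S$ with $B_\rho(\Sigma)\subset S$, $\bigcup_{p\in\Sigma}B_{3R}(p)=M$, and $R\leq\dist(p,\Sigma\setminus\{p\})\leq 6R$. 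Put $\tilde S:=B_{\tilde\rho/4}(\Sigma)$, so that $\tilde S_{\tilde\rho/4}=B_{\tilde\rho/2}(\Sigma)\subset B_\rho(\Sigma)\subset S$. Apply Theorem~\ref{thm:UCP-abstrakt} with this $\tilde S$ and thickening parameter $\tilde\rho/4$ to obtain $\chi_I(H)\Eins_{\tilde S_{\tilde\rho/4}}\chi_I(H)\geq\kappa\chi_I(H)$; since $\Eins_{\tilde S_{\tilde\rho/4}}\leq\Eins_S$ as multiplication operators, this immediately upgrades to the conclusion of Theorem~\ref{thm:ricci} with the same $\kappa$.

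The three abstract ingredients are estimated as follows. Because $\Sigma\subset\tilde S$ and $\bigcup_{p\in\Sigma}B_{3R}(p)=M$, the set $\tilde S$ is $(4R,\tilde\rho/4)$-relatively dense; Theorem~\ref{eigenvalueestimatericci} combined with Lemma~\ref{lemma:liftingcurvature} yields
\[
\mu_0:=\inf\sigma(\Delta^{M,\tilde S})\gtrsim M_0,\qquad M_0:=\frac{(\sn_K(\tilde\rho/8)\wedge\sn_K(3R))^{n-2}}{\Vol_K(3R)},
\]
where the apparent discrepancy between $12R$ and $3R$ is absorbed into a dimensional constant using the doubling estimate of Proposition~\ref{prop:vdk}. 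Lemma~\ref{lem:upperbound} applied to the same skeleton bounds $\lambda_{M,\tilde S_{\tilde\rho/4}}\leq C_D R^{-2}e^{\sqrt{|K|}R}$, and Lemma~\ref{thm:exittime} with its ``$R$''-parameter set to $\tilde\rho$ and ``$\rho$''-parameter to $\tilde\rho/8$ gives
\[
\sup_{z\in M}\PP_z(\tau_{\tilde\rho/8}\leq\alpha/2)\leq C_0\exp\bigl(C_1|K|\tilde\rho^2-c\tilde\rho^2/\alpha\bigr)
\]
in its admissible range $\alpha\leq c(\ln D+\tilde\rho\sqrt{|K|})^{-2}\tilde\rho^2$.

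The final step is a three-parameter optimization over $\alpha_0,\alpha,\beta$. I would choose $\alpha_0$ of constant order in $\tilde\rho^2$ so that the prefactor $\tfrac{2}{\alpha_0}e^{\alpha_0\lambda_{M,\tilde S_{\tilde\rho/4}}/2}$ is bounded by $C\tilde\rho^{-2}\exp(C' e^{\sqrt{|K|}R})$, and fix $E_0+b:=\epsilon(1-a)M_0$ exactly as in the statement. Then balance the two small terms by requiring $e^{-\beta\alpha/(2(1-a))}$ and $\sqrt{\sup_z\PP_z(\tau_{\tilde\rho/8}\leq\alpha/2)}$ to coincide at $e^{-T}$ with
\[
T\sim\left|\ln\frac{\Vol_K(3R)}{(1-2\epsilon)\tilde\rho^2(\sn_K(\tilde\rho/8)\wedge\sn_K(3R))^{n-2}}\right|+C_7+C_8K\tilde\rho^2+C_9 e^{\sqrt{|K|}R},
\]
chosen so that after multiplication by the prefactor the correction stays below $(1-2\epsilon)M_0$. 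Inverting the exit-time bound then pins $\alpha\sim\tilde\rho^2/T$ and thus $\beta\sim(1-a)T^2/\tilde\rho^2$, and substitution into the abstract $\kappa$ yields $\kappa\sim\epsilon(1-a)M_0\tilde\rho^2/T^2$, which is exactly the form claimed in the statement.

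The main obstacle will be precisely this last step: keeping tight simultaneous control of all constants through the balancing while also respecting the admissibility window of Lemma~\ref{thm:exittime}, and in particular producing the bracket $[\,\cdots\,]^{-2}$ with the correct logarithmic, linear, and exponential pieces in $K$ and $R$. Secondary technical points include verifying that the auxiliary set $\tilde S$ fits the precise hypotheses of Lemma~\ref{lem:upperbound} (possibly after a harmless rescaling of its $R$-parameter and separating the trivial case $\Sigma=\{p\}$), and treating $K>0$ separately via Bonnet--Myers, where the spectrum is purely discrete but the bound on $\lambda_{M,\tilde S_{\tilde\rho/4}}$ takes the same analytic form.
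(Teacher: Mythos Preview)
Your proposal is correct and follows essentially the same route as the paper: shrink $S$ to $B_{\tilde\rho/4}(\Sigma)$ via the skeleton of Proposition~\ref{prop:skeleton}, apply Theorem~\ref{thm:UCP-abstrakt} with $\alpha_0\sim\tilde\rho^2$, bound $\mu_0$ via Theorem~\ref{eigenvalueestimatericci}, $\lambda_{M,\tilde S_{\tilde\rho/4}}$ via Lemma~\ref{lem:upperbound}, the exit time via Lemma~\ref{thm:exittime}, and then balance $\alpha$ and $\beta$ exactly as you describe. Two cosmetic points: the paper records $\tilde S$ as $(3R,\tilde\rho/4)$-relatively dense directly (no detour through $12R$ and doubling is needed), and the factor $(1-a)$ you carry into the final $\kappa$ actually cancels, since it appears both in $\beta_\alpha=\frac{1-a}{\alpha}(\tfrac{1}{320\alpha}-C_1K)\tilde\rho^2$ and in the numerator $(1-a)\tilde\epsilon\Lambda$.
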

\begin{proof}
Due to domain monotonicity, we can replace $S$ by any subset. Proposition~\ref{prop:skeleton} ensures the existence of a uniformly discrete subset $\Sigma\subset S$ such that $B_\rho(\Sigma)$ is $(3R,\rho)$-relatively dense. Set $\tilde\rho:=\rho\wedge (3R/16)$. We choose $S':=B_{\tilde\rho/4}(\Sigma)\subset S$, which is $(3R,\tilde\rho/4)$-relatively dense. 
Theorem~\ref{thm:exittime} yields for all $R=R'=\tilde \rho$ and $\alpha\leq 2 (31\cdot 10^5)^{-1}(\ln D+\tilde\rho\sqrt K)^{-2}\tilde\rho^2$
\[
\euler^{-\frac{\beta\alpha}{1-a}}+\sup_{x\in M}\PP_x(\tau_{\tilde\rho/8}\leq \alpha/2)
\leq \euler^{-\frac{\beta\alpha}{1-a}}+C_0\euler^{C_1\tilde \rho^2K-\frac{\tilde\rho^2}{1280\alpha}},
\]
where $C_0$ and $C_1$ are given in Theorem~\ref{thm:exittime}.
The negative exponents on the right-hand side are equal if we choose
\[
\beta=\beta_\alpha:=\frac{1-a}{\alpha}\left(\frac{1}{1280\alpha}-C_1K\right)\tilde\rho^2.
\]
If $\alpha\leq \alpha_1:=1/(1280C_1K)$ we have $\beta>0$. Further, we have
\[
\euler^{-\frac{\beta\alpha}{1-a}}+\sup_{x\in M}\PP_x(\tau_{\tilde\rho/2}\leq \alpha/2)
\leq \left(1+C_0\right)\euler^{\left(-\frac{1}{1280\alpha}+C_1K\right)\tilde\rho^2}.
\]
Theorem~\ref{thm:UCP-abstrakt} implies 
\[
\chi_I (\Delta + V) \Eins_{S'_{\tilde\rho/4}} \chi_I (\Delta + V) \geq \tilde\kappa \ \chi_I (\Delta + V),
\]
where, choosing $\alphamax=2\tilde\rho^2$, $\mu_0=\lambda^{M,S'}$,
\begin{align*}
\tilde\kappa(\tilde\rho, R, t)&:=
\frac{1}{\beta_\alpha}\left[\left(\mu_{0} - \frac2{\alphamax}\euler^{\alphamax\lambda_{M , S'_{\tilde\rho/4}}/2 }(\euler^{-\frac{\beta}{1-a}\alpha}+\sup_{z \in M} \PP_z (\tau_{\tilde\rho / 8} \leq \alpha/2 ))\right)-(E_0+b)\right]
\\
&\geq 
\frac{1}{\beta_\alpha}\left[(1-a)\left(\mu_{0} - p
\right)
-(E_0+b)\right],
\end{align*}
 where we set
 \[
 p=p(\tilde\rho,\alpha,K):=\frac{\euler^{\tilde\rho^2\lambda_{M , S'_{\tilde\rho/4}} }}{\tilde\rho^2}\left(1+C_0\right)\euler^{\left(-\frac{1}{1280\alpha}+C_1K\right)\tilde\rho^2}.
 \]
Since $S'$ is properly $(R,\tilde\rho/4)$-dense, Theorem~\ref{eigenvalueestimatericci} yields
\[
\mu_0=\lambda^{M,S'} \geq \Lambda=\Lambda(n,\tilde \rho,K,R):=C_n\frac{(\sn_K(\tilde \rho/8)\wedge \sn_K(3R))^{n-2}}{\Vol_K(3R)}.
\]
If we 
choose
for $\tilde\epsilon\in(0,1)$
\[
\alpha
\leq\alpha_2:=
\frac{1}{320}\left[\left|\ln\left(\frac{1+C_0}{(1-\tilde\epsilon)\Lambda}\frac{\euler^{\tilde\rho^2\lambda_{M,S'_{\tilde\rho/4}}}}{\tilde\rho^2}\right)\right|+C_1K\tilde\rho^2\right]^{-1} \tilde\rho^2,
\]
we obtain
\[
\mu_0-p\geq \Lambda-p\geq \tilde\epsilon\ \Lambda.
\] 
Hence, choosing $\alpha=\alpha_\ast:=\alpha_1\wedge \alpha_2\wedge (31\cdot 10^5)^{-1}(\ln D+\tilde \rho \sqrt K)^{-2}\tilde\rho^2$ yields
\begin{multline*}
\tilde \kappa
\geq
\frac{1-a}{\beta_{\alpha_\ast}}\left(\mu_{0} - p(\tilde\rho,\alpha_\ast,K)
\right)
-\frac{E_0+b}{\beta_{\alpha_\ast}}
\\
\geq 
\frac{1}{\frac{1-a}{\alpha_\ast}\left(\frac{1}{1280\alpha_\ast}-C_1K\right)\tilde\rho^2}\left[(1-a)\tilde\epsilon \ \Lambda
-(E_0+b)\right]
\geq 
\frac{1280\alpha_\ast^2}{\tilde\rho^2}\left[\tilde\epsilon \ \Lambda
-\frac{E_0+b}{1-a}\right]
.
\end{multline*}
It remains to bound $\alpha_\ast$ from below. We have 
\begin{align*}
\alpha_\ast
&=
\alpha_1\wedge \alpha_2\wedge \frac{1}{31\cdot 10^5}(\ln D+\tilde \rho \sqrt K)^{-2}\tilde\rho^2
\\
&
=\frac{\tilde\rho^2}{20}\left[\frac{1}{16C_1K\tilde\rho^2}
\wedge \left[\left|\ln\left(\frac{1+C_0}{(1-\tilde\epsilon)\Lambda}\frac{\euler^{\tilde\rho^2\lambda_{M,S'_{\tilde\rho/4}}}}{\tilde\rho^2}\right)\right|+C_1K\tilde\rho^2\right]^{-1} \!\!\!\!\!\!\!\!\! \wedge \frac{2}{31\cdot 10^4}(\ln D+\tilde \rho \sqrt K)^{-2}\right]
\\
&
\geq \tilde\rho^2\left[c_0\left|\ln\left(\frac{c_1}{(1-\tilde\epsilon)\Lambda}\frac{\euler^{\tilde\rho^2\lambda_{M,S'_{\tilde\rho/4}}}}{\tilde\rho^2}\right)\right|+c_1+c_2\tilde\rho^2K\right]^{-1},
\end{align*}
where we used $(a+b)^2\leq 2a^2+b^2$ and set $c_0:=20$, $c_1:=20(1+C_0)$, $c_1:=31\cdot10^5 (\ln D)^2$, and $c_2:=20(16C_1\vee 62\cdot10^5)$.
Lemma~\ref{lem:upperbound} applied to $\Sigma$ with the choice $\rho=\tilde\rho/2$ yields the estimate $\lambda_{M,S'_{\tilde\rho/4}}\leq 64D\euler^{\sqrt KR}/R^2$. Using $\tilde\rho\leq \rho$, we obtain
\begin{align*}
\tilde \kappa
&\geq 
\frac{20\alpha_\ast^2 }{\tilde\rho^2}\left(\tilde\epsilon\ \Lambda-\frac{E_0+b}{1-a}\right)
\\
&
\geq 
\tilde\rho^2\left(\tilde\epsilon\ \Lambda-\frac{E_0+b}{1-a}\right)\left[c_0\left|\ln\left(\frac{c_1}{(1-\tilde\epsilon)\tilde\rho^2 \ \Lambda}\right)\right|+\tilde\rho^2\lambda_{M,S'_{\tilde\rho/4}}+c_1+c_2\tilde\rho^2K\right]^{-2}
\\
&
\geq 
\tilde\rho^2\left(\tilde\epsilon\ \Lambda-\frac{E_0+b}{1-a}\right)\left[c_0\left|\ln\left(\frac{c_1}{(1-\tilde\epsilon)\tilde\rho^2 \ \Lambda}\right)\right|+c_1+c_2\rho^2K++c_3\euler^{\sqrt KR}\right]^{-2},
\end{align*}
where $c_3:=194D/16$. The statement follows by choosing $E_0=(1-a)\Lambda\tilde\epsilon/2-b$ and substituting $\epsilon:=\tilde\epsilon/2\in(0,1/2)$.
\end{proof}
\paragraph{Acknowledgement} C.R. gratefully acknowledges support by the DFG. 
%
%

\end{document}